\documentclass[12pt,a4paper,reqno]{amsart}
\usepackage[T1]{fontenc}
\usepackage{lmodern}
\usepackage{amsmath,amssymb,mathtools,mathrsfs}
\usepackage{microtype}
\usepackage[a4paper,hmargin=28mm,top=26mm,bottom=28mm,includeheadfoot]{geometry}
\usepackage{tikz-cd}
\usepackage{booktabs}
\usepackage{xurl}
\usepackage{xcolor}
\definecolor{linkblue}{RGB}{35,64,105}
\usepackage[colorlinks=true,linkcolor=linkblue,citecolor=linkblue,urlcolor=linkblue]{hyperref}
\allowdisplaybreaks[1]
\newtheoremstyle{readableplain}{9pt plus 2pt minus 1pt}{8pt plus 2pt minus 1pt}{\itshape}{}{\bfseries}{.}{.5em}{}
\newtheoremstyle{readabledefinition}{9pt plus 2pt minus 1pt}{8pt plus 2pt minus 1pt}{\normalfont}{}{\bfseries}{.}{.5em}{}
\newtheoremstyle{readableremark}{9pt plus 2pt minus 1pt}{8pt plus 2pt minus 1pt}{\normalfont}{}{\itshape}{.}{.5em}{}
\theoremstyle{readableplain}
\newtheorem{theorem}{Theorem}[section]
\newtheorem{proposition}[theorem]{Proposition}
\newtheorem{lemma}[theorem]{Lemma}
\newtheorem{corollary}[theorem]{Corollary}

\newtheorem{conjecture}{Conjecture}[section]
\theoremstyle{readabledefinition}

\theoremstyle{readableremark}
\newtheorem{remark}[theorem]{Remark}
\makeatletter
\renewcommand{\section}{\@startsection{section}{1}{\z@}{-2.5ex\@plus-.8ex\@minus-.2ex}{1.2ex\@plus.2ex}{\normalfont\large\bfseries\raggedright}}
\renewcommand{\subsection}{\@startsection{subsection}{2}{\z@}{-2ex\@plus-.6ex\@minus-.2ex}{.8ex\@plus.2ex}{\normalfont\bfseries\raggedright}}
\renewcommand{\subsubsection}{\@startsection{subsubsection}{3}{\z@}{-1.5ex\@plus-.4ex\@minus-.2ex}{.6ex\@plus.1ex}{\normalfont\bfseries\raggedright}}
\makeatother
\numberwithin{equation}{section}

\DeclareMathOperator{\Spec}{Spec}

\DeclareMathOperator{\Pic}{Pic}

\newcommand{\C}{\mathbb C}

\newcommand{\Pp}{\mathbb P}
\newcommand{\OO}{\mathcal O}

\newcommand{\MM}{\mathcal M}
\newcommand{\Ga}{\mathbb G_a}

\newcommand{\dd}{\mathrm d}

\title[Transverse rational curves and the LeBrun--Salamon Conjecture]{Transverse Rational Curves and the LeBrun--Salamon Conjecture}
\author{Zhengyu Hu}
\address{Zhengyu Hu: Mathematical Sciences Research Center, Chongqing University of Technology, Chongqing 400054, China}
\email{zhengyuhu16@gmail.com}
\author{Kefeng Liu}
\address{Kefeng Liu: Mathematical Sciences Research Center, Chongqing University of Technology, Chongqing 400054, China}
\email{kefengliu@cqut.edu.cn}
\author{Xueyuan Wan}
\address{Xueyuan Wan: Mathematical Science Research Center, Chongqing University of Technology, Chongqing 400054, China}
\email{xwan@cqut.edu.cn}

\makeatletter
\@namedef{subjclassname@2020}{\textup{2020} Mathematics Subject Classification}
\makeatother
\subjclass[2020]{53C26, 14M17, 14J45, 32Q26}
\keywords{positive quaternionic--K\"ahler manifolds, twistor spaces, contact Fano manifolds, rational curves, varieties of minimal rational tangents, additive actions}

\begin{document}
\begin{abstract}
We prove that every compact connected positive quaternionic--K\"ahler manifold of real dimension $4n$ with $n\ge2$ is isometric to a Wolf space after rescaling the metric by a positive constant. This proves the LeBrun--Salamon conjecture. The proof uses transverse rational curves and the contact structure of the twistor space.
\end{abstract}
\maketitle

\section{Introduction}

Positive quaternionic--K\"ahler manifolds are Einstein manifolds with holonomy contained in $\mathrm{Sp}(n)\mathrm{Sp}(1)$. Their twistor spaces carry complex contact structures, which relate their Riemannian geometry to the geometry of contact Fano manifolds. A \emph{Wolf space} is a compact, simply connected, irreducible quaternionic--K\"ahler symmetric space with its canonical invariant Einstein metric, unique up to a positive constant factor. Examples include $\mathbb H\Pp^n$ and $\mathrm{Gr}_2(\C^{n+2})$; see Wolf's classification \cite{Wolf65,Salamon82}. The LeBrun--Salamon conjecture asserts that every compact positive quaternionic--K\"ahler manifold is a Wolf space. It would thus classify these manifolds and their contact Fano twistor spaces.

Salamon developed the differential geometry of the twistor construction, and LeBrun established its relation to contact Fano manifolds and proved the uniqueness results used below \cite{Salamon82,LeBrun95}. The study of contact lines and their tangent directions gives further restrictions on these Fano manifolds; see \cite{Kebekus01,Buczynski10,BKK22}. We use the following formulation of the conjecture.

\begin{conjecture}[LeBrun--Salamon {\cite{LeBrunSalamon94,LeBrun95}}]
\label{conj:LS}
Every compact connected positive quaternionic--K\"ahler manifold of real dimension $4n$, with $n\ge2$, is isometric, after multiplication of its metric by a positive constant, to a Wolf space.
\end{conjecture}

All varieties and stacks are defined over $\C$. We write $\Pp(E)$ for the bundle of one-dimensional subspaces of the fibres of $E$, and $\Ga$ for the additive algebraic group. Each chosen irreducible component of a curve family is given its reduced structure. The algebraic families are constructed before passing to analytic neighbourhoods and holomorphic flows.

Let $(X^{4n},g)$ be such a manifold and let $\pi:Y\to X$ be its twistor space.  Then $Y$ has complex dimension $2n+1$ and fits into the contact sequence
\begin{equation}\label{eq:intro-contact}
0\longrightarrow D\longrightarrow T_Y\xrightarrow{\theta}L\longrightarrow0,
\qquad K_Y^{-1}\simeq L^{\otimes(n+1)},
\end{equation}
where $L$ is ample and $Y$ admits a K\"ahler--Einstein metric.  Each twistor fibre $C_x=\pi^{-1}(x)$ is a smooth embedded rational curve with
\begin{equation}\label{eq:intro-fibre}
C_x\simeq\Pp^1,\qquad N_{C_x/Y}\simeq\OO_{\Pp^1}(1)^{\oplus 2n},\qquad
\theta|_{T C_x}:T C_x\xrightarrow{\sim}L|_{C_x}.
\end{equation}
These properties are part of the standard twistor construction \cite[Theorem~2.1]{LeBrun95}.  The twistor space also has a fixed-point-free antiholomorphic involution $\sigma:Y\to Y$ that preserves each fibre and acts on it as the antipodal map.

The contact Fano reduction of \cite[Proposition~2.3]{BKK22} (see also \cite{KPSW00}) gives the following three alternatives:
\begin{equation}\label{eq:intro-reduction}
Y\simeq\Pp^{2n+1},\qquad\text{or}\qquad
Y\simeq\Pp(T^*\Pp^{n+1}),\qquad\text{or}\qquad
\Pic(Y)=\mathbb Z[L].
\end{equation}
In the first case, $Y=\Pp^{2n+1}$ is the twistor space of $\mathbb H\Pp^n$; in the second, $Y=\Pp(T^*\Pp^{n+1})$ is the twistor space of $\mathrm{Gr}_2(\C^{n+2})$. LeBrun's uniqueness theorem \cite[Corollaries~3.3--3.4]{LeBrun95} identifies the original metric with the corresponding Wolf-space metric up to a positive constant factor. It remains to treat the third case.

We prove the following theorem.

\begin{theorem}\label{thm:main}
Let $(X^{4n},g)$ be a compact connected positive quaternionic--K\"ahler manifold with $n\ge2$.  Then, after multiplying $g$ by a positive constant, $(X,g)$ is isometric to a Wolf space.
\end{theorem}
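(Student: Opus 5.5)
By the reduction \eqref{eq:intro-reduction} and LeBrun's uniqueness theorem, it suffices to treat the case $\Pic(Y)=\mathbb Z[L]$. There the target is to show that $Y$ is homogeneous, i.e.\ that $Y$ is the adjoint variety (the closed orbit in $\Pp(\mathfrak g)$) of a simple Lie algebra $\mathfrak g$. Once this is established, LeBrun's result \cite[Corollaries~3.3--3.4]{LeBrun95} together with the classification of homogeneous contact Fano manifolds \cite{KPSW00,BKK22} identifies $X$ with the corresponding Wolf space, up to scaling of $g$. The plan is to detect homogeneity through the variety of minimal rational tangents.

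\emph{Step 1.} Consider the family of contact lines, namely the rational curves $\ell\subset Y$ with $L\cdot\ell=1$; these are automatically tangent to $D$. At a general point $y$ we form the VMRT $\mathcal C_y\subset\Pp(D_y)$. By the results of Kebekus and Buczy\'nski \cite{Kebekus01,Buczynski10}, $\mathcal C_y$ is Legendrian for the induced symplectic form on $D_y$, and it is smooth at general points. The twistor fibres $C_x$ are \emph{transverse} rational curves: they satisfy $\theta|_{TC_x}\neq0$ and have degree $L\cdot C_x=2$, and the real structure $\sigma$ makes their family (the complexification of $X$) large, of dimension $4n$.

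\emph{Step 2.} Use the transverse curves to build additive actions. Through a general point $y$, the fibre $C_x$ together with the $\OO(1)^{\oplus 2n}$ normal bundle gives a $2n$-dimensional family of deformations fixing $y$. I would show that the contact lines through $y$, swept along a twistor fibre, produce infinitesimal automorphisms. Concretely, the cubic form $\Phi$ (the second fundamental form of the Legendrian $\mathcal C_y$) should be recovered from the osculating data of transverse conics. Flowing along holomorphic vector fields generated in this way should yield a $\Ga^{2n}$-type action near a general point. Combining these actions with the Euler-type field along $C_x$ should realize a graded Lie algebra $\mathfrak g=\mathfrak g_{-2}\oplus\mathfrak g_{-1}\oplus\mathfrak g_0\oplus\mathfrak g_1\oplus\mathfrak g_2$ of holomorphic vector fields on $Y$, with $\mathfrak g_{-1}\simeq D_y$ and $\mathfrak g_{-2}\simeq L_y$.

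\emph{Step 3.} Show that the automorphism group acts with an open orbit, and then transitively. The complement of the open orbit is $\sigma$-invariant and has codimension at least $2$ once we use $\Pic(Y)=\mathbb Z[L]$ and ampleness. From this we conclude that $Y$ is homogeneous, hence an adjoint variety, and Step 1's identification finishes the proof.

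The main obstacle is Step 2: integrating the infinitesimal data into genuine global vector fields. That requires proving that the relevant sections of $T_Y$ are unobstructed, i.e.\ that $H^0(Y,T_Y)\to H^0(C_x,T_Y|_{C_x})$ has large enough image. My first attempt would use the vanishing $H^1(C_x,N(-2))=0$ coming from \eqref{eq:intro-fibre}. I would combine it with the Kähler–Einstein metric and LeBrun's identification $H^0(Y,L)\simeq$ Killing fields. The goal is to show that $h^0(L)$ is at least the dimension of the adjoint representation predicted by the VMRT (via the Hwang–Mok recognition principle applied to the Legendrian $\mathcal C_y$).
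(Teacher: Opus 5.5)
Your reduction to $\Pic(Y)=\mathbb Z[L]$ and your endgame are fine. Step~2, however, carries all the content, and it is not an argument. It is a list of expectations (``should be recovered'', ``should yield'', ``should realize'') with no mechanism that produces even one global holomorphic vector field on $Y$. That is the whole difficulty of the conjecture: in the Picard-number-one case nothing a priori prevents $H^0(Y,T_Y)$ from being small. Equivalently, $H^0(Y,L)$ or the complexified isometry algebra of $X$ could be small.

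The tool you name cannot supply such fields. The vanishing $H^1(C_x,N_{C_x/Y}(-2))=H^1(\Pp^1,\OO(-1)^{\oplus 2n})=0$ only says that curves through a point are locally parametrized by their tangent direction there; the paper uses it for exactly this in Lemma~\ref{lem:span}. It concerns deformations of a single curve and gives no control over the image of $H^0(Y,T_Y)\to H^0(C_x,T_Y|_{C_x})$. The K\"ahler--Einstein metric adds nothing to this. Using the Hwang--Mok recognition principle to predict $h^0(L)$ is circular: the principle presupposes that the VMRT is a subadjoint variety, and your cubic-form remark does not establish that.

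Step~3 is also unsupported. Picard number one does not force the complement of an open orbit to have codimension $\ge2$: the Mukai--Umemura threefold has an open $\mathrm{PGL}_2$-orbit whose complement is a divisor. An open orbit with boundary of codimension $\ge2$ still does not give homogeneity either, as odd symplectic Grassmannians show. Some extra input, such as reductivity of the automorphism group, is needed.

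What is missing is a way to globalize local symmetries, and the paper's proof is built around one.
\begin{itemize}
\item For each $z$ it constructs an algebraic $\Ga$-action on a \emph{projective} normal incidence variety $W_z$ of $L$-degree-two stable maps through $z$. The action comes from contracting the two-point tensor $q$ with the contact form.
\item The divisibility in Lemma~\ref{lem:linebundle} and Proposition~\ref{prop:extend} makes the field extend across nodal domains. So the source is compact, and $E_z:W_z\to Y$ is surjective and generically finite.
\item First-jet interpolation shows that the local biholomorphisms induced by $E_z\circ g_\tau$ preserve the VMRT of contact lines, which is then shown to be nonlinear.
\item The Hwang--Mok rigidity theorem for generically finite maps onto Fano manifolds of Picard number one with nonlinear VMRT (Theorem~\ref{thm:hwangmok}) converts the family $E_z\circ g_\tau$ into global biholomorphisms. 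This yields a global contact field $H_z$.
\item The Hamiltonian of $H_z$ is nonzero at $\sigma z$, because the field is nonzero and tangent to the transverse real twistor fibre there. Hence $L$ is globally generated.
\item Beauville's theorem, with reductivity from LeBrun, then identifies $Y$ as an adjoint variety. No open-orbit or homogeneity argument is needed.
\end{itemize}
To turn your outline into a proof, you would have to replace Steps~2 and~3 by a chain of this kind.
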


The proof uses transverse rational curves $C\subset Y$ of $L$-degree two, that is,
\(\deg(L|_C)=2\), where $L$ is the contact line bundle. A smooth rational
curve $C\subset Y$ is transverse if $\theta|_{TC}$ has no zero, and balanced if
$N_{C/Y}\simeq\OO(1)^{\oplus2n}$. We construct an intrinsic two-point tensor on
the universal family and extend its contraction with the contact differential
across the nodal boundary. The resulting vector field integrates to an
algebraic additive action. First-jet interpolation shows that the induced local
maps preserve minimal rational tangents. A theorem of Hwang and Mok then gives
global contact vector fields whose Hamiltonians generate the contact line
bundle. Beauville's theorem identifies the twistor space, and LeBrun's
uniqueness theorem identifies the metric.

\begin{remark}
This project was developed over several months, and its main result and core argument were obtained well before the appearance of the OpenAI manuscript \cite{OpenAI2026}. Our proof was developed independently and follows a different approach. The OpenAI manuscript smooths a chosen pair of contact lines into very free conics and then uses a diagonal-jet construction. Our argument starts from transverse stable domains of contact degree two, extends an intrinsic tensor across the nodal boundary, and combines the resulting additive action, generated by a nilpotent endomorphism, with first-jet interpolation. This gives a direct incidence-based proof without a separate smoothing or diagonal-jet argument.
\end{remark}

\section{Stable domains of contact degree two}\label{sec:stable}

We retain the notation and assumptions of the Introduction.  In particular, $Y$ is a smooth projective contact manifold of dimension $2n+1$, with contact sequence
\[
0\longrightarrow D\longrightarrow T_Y\xrightarrow{\theta}L\longrightarrow0,
\]
where $L$ is ample.  Fix a smooth embedded balanced transverse rational curve
$C_0\subset Y$; such curves are provided by the twistor fibres in
\eqref{eq:intro-fibre}.  Set
\[
\beta=[C_0]\in H_2(Y,\mathbb Z).
\]
Transversality gives $T_{C_0}\simeq L|_{C_0}$, and hence
$L\cdot\beta=2$.  For a stable map $h:C\to Y$, its $L$-degree is
$\deg(h^*L)$, namely the sum of the degrees on the irreducible components of
$C$.  Thus every stable map representing $\beta$ has $L$-degree two.

Let $\overline{M}_{0,0}(Y,\beta)$ denote the proper Deligne--Mumford stack of
unmarked genus-zero stable maps of class $\beta$ to $Y$.  Let $\MM$ be the
reduced irreducible component containing the point represented by the inclusion
$C_0\hookrightarrow Y$.  Its coarse moduli space is projective by the standard projective construction of stable-map moduli spaces \cite{FultonPandharipande97,BehrendManin96}.  Choose an \emph{\'etale atlas} $S\to\MM$, that is,
a surjective \'etale morphism from a scheme (see \cite[Tag~06N3]{StacksProject}).  Write
\[
p:\mathcal C\longrightarrow S,\qquad f:\mathcal C\longrightarrow Y
\]
for the universal curve and universal map pulled back to $S$.  The base $S$ is
reduced, and the smooth-domain locus is dense.  We carry out the constructions
on the stack and then descend them to the coarse spaces.

\begin{lemma}\label{lem:fibre}
Let $C=\mathcal C_s$ be a geometric fibre of $p$, and write $f_s=f|_C$.  Then $C$ is either a smooth projective line or the union of two smooth projective lines meeting at a single node.  In the second case, $f_s^*L$ has degree one on each component.  No component of $C$ is mapped to a point by $f_s$.
\end{lemma}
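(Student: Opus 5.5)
The argument is a degree count using ampleness of $L$ together with the stability condition. Let $C=\mathcal C_s$ with irreducible components $C_1,\dots,C_k$. Each $C_i$ is a smooth rational curve, since the arithmetic genus is zero, and the dual graph is a tree. The class $f_{s*}[C]=\beta$ gives
\[
\sum_i \deg(f_s^*L|_{C_i})=L\cdot\beta=2.
\]
Since $L$ is ample, $\deg(f_s^*L|_{C_i})\ge0$, with equality exactly when $f_s$ contracts $C_i$ to a point. Indeed, a nonconstant map from $\Pp^1$ has finite image curve, and the pullback of an ample bundle along it has positive degree. Hence at most two components are non-contracted, and their degrees are $(2)$ or $(1,1)$.

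The next step is to rule out contracted components. Stability for unmarked genus-zero stable maps says that every contracted component carries at least three special points, and here all special points are nodes. I would argue on the tree. Suppose the set of contracted components is nonempty. Consider the subforest they span, and take a leaf of it, or an isolated vertex. Such a component $C_i$ meets at most one other contracted component, so at least two of its $\ge3$ nodes join it to non-contracted components. Now pick a connected component $T$ of this contracted subforest. If $T$ has a single vertex, that vertex has at least three non-contracted neighbours. If $T$ has at least two leaves, each leaf has at least two non-contracted neighbours, and distinct such adjacencies give distinct non-contracted components, because the dual graph is a tree with no cycles. In either case there are at least three non-contracted components. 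This contradicts the bound of at most two, so no component is contracted, which proves the last assertion.

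Therefore $k\le2$. If $k=1$, then $C\simeq\Pp^1$ is smooth. If $k=2$, the two components each have degree at least one and total degree $2$, so each has $f_s^*L$-degree exactly one. Since the dual graph is a tree with two vertices, they meet in exactly one node.

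The only delicate step is the tree-counting argument for contracted components. It relies on acyclicity: a leaf of the contracted subforest cannot share a non-contracted neighbour with another leaf of the same subtree, since that would create a cycle.
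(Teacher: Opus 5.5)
Your proof is correct and follows the same route as the paper: the degree count $\sum_i d_i=L\cdot\beta=2$ with $d_i>0$ on noncontracted components by ampleness, stability forcing each contracted component to have at least three nodes, and acyclicity of the dual tree. The only difference is in the combinatorial step, and both versions are valid. The paper deletes a contracted vertex and, in each of the resulting (at least three) branches, takes a vertex farthest from it; that vertex is a leaf of the whole tree and hence noncontracted. You instead count the noncontracted neighbours of the leaves of the contracted subforest.
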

\begin{proof}
The curve $C$ is connected and nodal, with arithmetic genus zero.  Hence each irreducible component $C_i$ is a smooth projective line.  Its dual graph $\Gamma$, with one vertex for each component and one edge for each node, is a tree: it is connected and has no cycles.
Set $d_i=\deg((f_s|_{C_i})^*L)$.  If $f_s$ maps $C_i$ to a point, then $d_i=0$; such a component is called \emph{contracted}.  Otherwise $d_i$ is a positive integer, because $L$ is an ample line bundle.  Degrees add over the components, so
\[
\sum_i d_i=\deg(f_s^*L)=L\cdot\beta=2.
\]
Thus there are at most two noncontracted components.  To exclude contracted components, we use stability.

In an unmarked stable map, each contracted rational component meets the other components in at least three points; see \cite[Section~1.1]{FultonPandharipande97}.  Otherwise infinitely many automorphisms of the component would fix its attachment points and preserve the map.  Thus a vertex of $\Gamma$ corresponding to a contracted component has at least three incident edges.  In particular, each leaf, that is, a vertex with only one incident edge, corresponds to a noncontracted component.

If a contracted component existed, removing its vertex from $\Gamma$ would leave at least three disjoint branches.  In each branch, choose a vertex farthest from the removed vertex.  Since $\Gamma$ is a finite tree, this vertex is a leaf of the original graph.  This gives at least three distinct leaves, hence at least three noncontracted components, contrary to the degree bound.  Thus no component is contracted.

It follows that $C$ has either one or two components.  In the first case, $C\simeq\Pp^1$.  In the second, the tree has two vertices joined by one edge, so the two components meet at exactly one node.  Both degrees are one, since they are positive integers with sum two.
\end{proof}

Put $F=f^*L$ and $\omega=\omega_{\mathcal C/S}$.  The relative dualizing sheaf $\omega_{\mathcal C/S}$ of $p:\mathcal C\to S$ plays the role of the canonical bundle in relative Serre duality.  It is invertible because $p$ is a family of nodal curves.  On a smooth fibre $C$ it restricts to the canonical bundle $\omega_C$, while if $C=C_1\cup C_2$ has a node $q$, then
\[
\omega_{\mathcal C/S}|_{C_i}\simeq\omega_{C_i}(q_i),
\]
where $q_i\in C_i$ is the point corresponding to the node.

\begin{lemma}\label{lem:linebundle}
There is an invertible sheaf
\[
B=p_*(F\otimes\omega)
\]
and a canonical evaluation isomorphism
\begin{equation}\label{eq:eval}
p^*B\simeq F\otimes\omega,
\qquad F\simeq\omega^{-1}\otimes p^*B.
\end{equation}
The relative contact differential determines a section $s\in H^0(S,B)$.  Near
a node, choose an \'etale-local equation $xy=t$ over $U=\Spec A$, where
$t\in A$.  Here $t$ is the actual element appearing in the node equation,
and it is the parameter used in the divisibility statement.  Then $s|_U$ is
divisible by $t$ as a section of $B|_U$;
equivalently,
\[
s|_U\in t\,H^0(U,B|_U).
\]
\end{lemma}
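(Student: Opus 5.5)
The plan is to prove the three assertions in turn: that $B$ is invertible with evaluation an isomorphism, that the contact form gives a section $s$, and that $s$ is divisible by $t$ near a node. The first is a fibrewise degree computation followed by cohomology and base change. The second is formal. The third is a local computation at the node.

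\emph{Step 1: $F\otimes\omega$ is fibrewise trivial.} By Lemma~\ref{lem:fibre}, every geometric fibre $C$ is either $\Pp^1$ or $C_1\cup C_2$ meeting at one node $q$.
\begin{itemize}
\item On a smooth fibre, $\deg F|_C=2$ and $\deg\omega_C=-2$.
\item On a component $C_i$ of a nodal fibre, $\deg F|_{C_i}=1$ (by Lemma~\ref{lem:fibre}) and $\deg\omega_{C_i}(q_i)=-1$.
\end{itemize}
So $F\otimes\omega$ has degree zero on every component of every fibre. A line bundle on a tree of $\Pp^1$'s with degree zero on each component is trivial: glue the constant sections across the single node. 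Hence $h^0(C,(F\otimes\omega)|_C)=1$ and $h^1=0$ on every fibre.

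Since $p$ is flat and proper and $F\otimes\omega$ is flat over $S$, cohomology and base change (Grauert, using constancy of $h^0$ over the reduced base $S$, or directly from the vanishing of $h^1$) shows:
\begin{itemize}
\item $B=p_*(F\otimes\omega)$ is locally free of rank one and commutes with base change;
\item the evaluation map $p^*B\to F\otimes\omega$ restricts on each fibre to the evaluation of a nowhere-vanishing section of a trivial bundle, so it is surjective, hence an isomorphism of invertible sheaves.
\end{itemize}
This gives \eqref{eq:eval}.

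\emph{Step 2: the section $s$.} Regard $\theta$ as an element of $H^0(Y,\Omega_Y\otimes L)$. Pull it back to $f^*\theta\in H^0(\mathcal C,f^*\Omega_Y\otimes F)$. Compose with the relative differential $f^*\Omega_Y\to\Omega_{\mathcal C/S}$ and with the canonical map $c:\Omega_{\mathcal C/S}\to\omega$, which is an isomorphism off the nodes. This produces a section $\tilde s$ of $F\otimes\omega$. Set $s=p_*\tilde s\in H^0(S,B)$; then $p^*s=\tilde s$ under \eqref{eq:eval}.

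\emph{Step 3: divisibility at a node.} Work étale-locally near a node, where $\mathcal C$ is étale over $A[x,y]/(xy-t)$. There $\omega$ is generated by $dx/x=-dy/y$, and $\Omega_{\mathcal C/S}$ is generated by $dx,dy$ subject to $x\,dy+y\,dx=0$. The map $c$ sends $dx\mapsto x\cdot(dx/x)$ and $dy\mapsto -y\cdot(dx/x)$. So the image of $c$ is $(x,y)\,\omega$, and therefore $\tilde s\in(x,y)\cdot(F\otimes\omega)$.

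Trivialize $B|_U$ by a generator $e$ and write $s=g e$ with $g\in A$. Then $p^*g\in(x,y)$ locally, so $g$ vanishes on the node locus $Z=V(x,y)$. Étale-locally $Z\simeq\Spec A/(t)$, and the projection $Z\to V(t)\subset U$ is étale and, after shrinking, surjective onto $V(t)$. Faithfully flat descent along $Z\to V(t)$ then shows that the image of $g$ in $A/(t)$ is zero. Hence $g\in tA$, i.e. $s|_U\in t\,H^0(U,B|_U)$.

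\emph{Main obstacle.} The delicate point is Step 3: passing from "$\tilde s$ lies in $(x,y)$ upstairs" to "$g\in tA$" scheme-theoretically. One must identify the node locus with $V(t)$ via an étale (hence faithfully flat) cover, rather than argue only on reduced points. That argument would give vanishing on $V(t)_{\mathrm{red}}$ and not divisibility by $t$ itself.
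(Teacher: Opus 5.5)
Your proposal is correct and follows essentially the same route as the paper. For the first part you both use fibrewise triviality of $F\otimes\omega$ together with cohomology and base change. For divisibility you both use the local computation $\operatorname{im}\rho=(x,y)\,\omega$ in the model $xy=t$, and then faithful flatness of the node locus over $V(t)$ (the paper phrases this as the flat local map $A/(t)\to Q/(x,y)Q$) to conclude $a\in tA$; the paper's additional remark that $t$ is a non-zero-divisor after shrinking serves only the later uniqueness of the quotient and is not needed for this statement.
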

\begin{proof}
Set $G=F\otimes\omega$.  We compute the restriction of $G$ to each fibre of $p$.  On a smooth fibre $C\simeq\Pp^1$, we have
$
F|_C\simeq\OO_{\Pp^1}(2),
$ $
\omega|_C\simeq\omega_C\simeq\OO_{\Pp^1}(-2),
$
so $G|_C\simeq\OO_{\Pp^1}$.  If the fibre is $C=C_1\cup C_2$, then each component has $F$-degree one, while
$
\omega|_{C_i}\simeq\omega_{C_i}(\text{node})\simeq\OO_{\Pp^1}(-1).
$
Thus $G$ has degree zero on both components.  Since the dual graph is a tree, a line bundle of multidegree zero on $C$ is trivial.  In either case,
$
H^0(C,G|_C)\simeq\C,
$ $H^1(C,G|_C)=0.$
Cohomology and base change apply as follows.  Since $p$ is proper, flat and
of finite presentation and $G$ is invertible, $G$ is flat over $S$.  The
preceding calculation gives $h^0(C_b,G|_{C_b})=1$ and
$h^1(C_b,G|_{C_b})=0$ for every geometric point $b$ of $S$.  Hence
cohomology and base change (see \cite[Lemmas~36.30.4 and 36.32.3, Tags~0B91 and 0B9S]{StacksProject}) show that $B=p_*G$ is a line bundle and that
$B\otimes k(b)\simeq H^0(C_b,G|_{C_b})$.  Under these identifications,
the evaluation map
\[
p^*B\longrightarrow G
\]
restricts on each fibre to the usual evaluation map
$H^0(C_b,G|_{C_b})\otimes\OO_{C_b}\to G|_{C_b}$, which is an isomorphism
because $G|_{C_b}\simeq\OO_{C_b}$.  Thus the evaluation map is an isomorphism
of line bundles.  This proves the first isomorphism in \eqref{eq:eval}; the
second follows by tensoring with $\omega^{-1}$.  The same calculation applied to $\OO_{\mathcal C}$ gives
$
p_*\OO_{\mathcal C}=\OO_S,
$
because every fibre is connected and has only constant global functions.

Pulling back the contact morphism $\theta:T_Y\to L$ by $f$ and composing it
with the differential of $f$ gives a section of
$\Omega^1_{\mathcal C/S}\otimes F$.  Its image under the natural morphism
induced by the canonical comparison map
\[
\rho:\Omega^1_{\mathcal C/S}\longrightarrow\omega
\]
between relative differentials and the relative dualizing sheaf (see
\cite[Chapter~X, \S~2]{ACG11}),
\[
\rho\otimes\mathrm{id}_F:\Omega^1_{\mathcal C/S}\otimes F
\longrightarrow\omega\otimes F\simeq p^*B,
\]
is the pullback $p^*s$ of a unique section $s\in H^0(S,B)$.  The uniqueness
follows from $p_*\OO_{\mathcal C}=\OO_S$.

It remains to prove divisibility.  Work \emph{\'etale-locally} near a node,
using the standard local structure of nodal families (see
\cite[Lemma~53.21.1]{StacksProject}).  Let $A$ be the local ring at the
corresponding point of the chosen \'etale base.  Choose $t\in A$ and the
model
\[
R=A[x,y]/(xy-t).
\]
Let $Q$ be the local ring at a point over the node in a common \'etale
neighbourhood of the curve and $\operatorname{Spec}R$.  Choose the standard
local generator $\eta$ of $\omega_{R/A}$ associated with the equation
$xy=t$.  Then
\[
\rho(dx)=x\eta,\qquad \rho(dy)=-y\eta,
\]
so $\operatorname{im}\rho=(x,y)\omega_{R/A}$.  The same image formula holds
after pullback to $Q$.

Trivialize $B$ and $F$ near the base point and write $s=a e$ with $a\in A$.
Since $p^*s$ lies in the image of $\rho\otimes\mathrm{id}_F$, the image of
$a$ in $Q$ belongs to $(x,y)Q$.  Now
\[
R/(x,y)\simeq A/(t),
\]
and the induced map
\[
A/(t)\longrightarrow Q/(x,y)Q
\]
is a flat local homomorphism, hence faithfully flat.  Since $a$ maps to zero
in $Q/(x,y)Q$, it is zero in $A/(t)$.  Thus $a\in tA$.  Since the
smooth-domain locus is dense in every irreducible component of the reduced
base, $t$ avoids all minimal primes of $A$; after shrinking the chosen
\'etale base, it is a non-zero-divisor.  The quotient $a/t$ is therefore
unique in $A$, and the same is true in every further \'etale refinement.
Thus $s$ is divisible by $t$ as a section of $B$.  The quotient itself
depends on the choice of the local smoothing equation (for example, replacing
$t$ by $ut$ multiplies it by the inverse unit $u^{-1}$); what is invariant is
the divisibility statement and, in Proposition~\ref{prop:extend}, the product
of this quotient with the corresponding regular tensor $Q$.  This proves the
assertion.
\end{proof}

\begin{lemma}\label{lem:liebundle}
The dual of $\Omega^1_{\mathcal C/S}\to\omega$ gives an injective map
\begin{equation}\label{eq:injection}
\iota:\omega^{-1}\hookrightarrow T_{\mathcal C/S}
\end{equation}
whose image is closed under the relative Lie bracket.  The sheaf $E=p_*\omega^{-1}$ is locally free of rank three and carries the induced $\OO_S$-bilinear Lie bracket.
\end{lemma}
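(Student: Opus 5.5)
The plan is to work étale-locally on $\mathcal C$. Away from the nodes the statement is trivial, and near a node we use the explicit model $R=A[x,y]/(xy-t)$ from the proof of Lemma~\ref{lem:linebundle}.

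\textbf{Injectivity and local form of $\iota$.} On the smooth locus of $p$, the comparison map $\rho:\Omega^1_{\mathcal C/S}\to\omega$ is an isomorphism, so its dual $\iota$ is an isomorphism there. Near a node we take the generator $\eta$ of $\omega_{R/A}$ and the dual generator $\eta^{-1}$ of $\omega^{-1}$. Since $\rho(dx)=x\eta$ and $\rho(dy)=-y\eta$, the dual map sends $\eta^{-1}$ to the derivation $v$ determined by $v(x)=x$ and $v(y)=-y$. This $v$ is a well-defined relative derivation of $R$, because $v(xy-t)=xy-xy=0$. Hence the image of $\iota$ is the rank-one submodule $R\,v\subset T_{R/A}=\operatorname{Hom}(\Omega^1_{R/A},R)$.

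Injectivity then needs $v$ to be torsion-free over $R$. I would use that $R$ is reduced with dense smooth locus, since $t$ is a non-zero-divisor after shrinking, as in Lemma~\ref{lem:linebundle}. Then $gv=0$ forces $gx=0$ and $gy=0$, which gives $g=0$ because $(x,y)$ contains a non-zero-divisor of $R$ (for instance $x+y$). The identification of $\iota$ with $\eta^{-1}\mapsto v$ is compatible with étale pullback to $Q$, so the local computation suffices.

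\textbf{Closure under the bracket.} The image is locally generated by the single field $v$. Therefore
\[
[gv,hv]=\bigl(g\,v(h)-h\,v(g)\bigr)v,
\]
which again lies in $Rv$. Away from the nodes, $\omega^{-1}\cong T_{\mathcal C/S}$, and closure is automatic. Thus the bracket on $T_{\mathcal C/S}$ restricts to an $\OO_S$-bilinear (in fact $p^{-1}\OO_S$-bilinear) bracket on $\omega^{-1}$. The main point is that the bracket stays in the image rather than merely in $T_{\mathcal C/S}$, and this follows from the rank-one local generation.

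\textbf{Local freeness of $E$.} I would again use cohomology and base change, exactly as in Lemma~\ref{lem:linebundle}. On a smooth fibre, $\omega^{-1}|_C\simeq\OO(2)$, so $h^0=3$ and $h^1=0$. On a nodal fibre $C_1\cup C_2$, $\omega^{-1}|_{C_i}\simeq\OO(1)$, and the sections are pairs of sections of $\OO(1)$ that agree at the node. This gives $h^0=2+2-1=3$. Moreover $h^1=0$ by the normalization sequence, since the map $H^0(\OO(1))\oplus H^0(\OO(1))\to\C$ to the node is surjective. Since $\omega^{-1}$ is flat over $S$, $E=p_*\omega^{-1}$ is locally free of rank three and commutes with base change.

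The bracket is a differential operator along fibres and is $p^{-1}\OO_S$-bilinear, so it induces $p_*\omega^{-1}\times p_*\omega^{-1}\to p_*\omega^{-1}$, which is $\OO_S$-bilinear. Jacobi and antisymmetry are inherited from $T_{\mathcal C/S}$. On a smooth fibre this recovers $\mathfrak{sl}_2$, since $H^0(\Pp^1,T)=\mathfrak{sl}_2$.

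The step I expect to be the main obstacle is the node computation of the dual of $\rho$. It must yield exactly the Euler-type field $v$, and the identification of the image has to be made in $T_{\mathcal C/S}$ over a possibly non-reduced-looking étale local ring, with torsion-freeness justified through the non-zero-divisor property of $t$.
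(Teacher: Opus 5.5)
Your proposal is correct and follows essentially the same route as the paper. Both use the local model $xy=t$ with $\eta^{-1}\mapsto x\partial_x-y\partial_y$, the bracket formula for a rank-one generated module, cohomology and base change with $h^0=3$ and $h^1=0$ on both fibre types, and $\mathcal O_S$-bilinearity coming from $p^{-1}\mathcal O_S$-linearity.

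The one difference is the injectivity step. The paper compares coefficients in the $A$-basis $1,x^i,y^j$ of $R$, so its argument works over any base ring and survives base change. The same normal form shows that $x+y$ is a non-zero-divisor for arbitrary $A$ and $t$, so your appeal to reducedness and to $t$ being a non-zero-divisor is unnecessary.
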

\begin{proof}
On the smooth locus of $p$, the map \eqref{eq:injection} is the usual identification of the inverse canonical line with the tangent line.  At a node, use the local model $R=A[x,y]/(xy-t)$.  The formulas $dx\mapsto x\eta$ and $dy\mapsto-y\eta$ show that the dual map sends
\[
\eta^{-1}\longmapsto D_0=x\partial_x-y\partial_y.
\]
If $uD_0=0$, applying this derivation to $x$ and $y$ gives $ux=uy=0$.  Using the relation $xy=t$, every element $u\in R$ has a unique expression
\[
u=a_0+\sum_{i\ge1}a_ix^i+\sum_{j\ge1}b_jy^j,
\]
with coefficients in $A$ and only finitely many nonzero terms.  Comparing the coefficients of the positive powers of $x$ in $ux=0$ gives $a_0=a_i=0$.  Then $uy=0$ gives $b_j=0$ for every $j$.  Thus $u=0$, proving injectivity on the local model; injectivity on an \'etale chart follows by flatness.  The calculation holds over any base ring $A$, so it remains valid after base change.

The image is closed under the Lie bracket: its local sections have the form $uD_0$, and
\[
[uD_0,vD_0]=(uD_0(v)-vD_0(u))D_0,
\]
which is again a multiple of $D_0$.

To compute the cohomology of $\omega^{-1}$, consider its restriction to each fibre.  On a smooth fibre it is $\OO_{\Pp^1}(2)$, so $h^0=3$ and $h^1=0$.  On a reducible fibre $C_1\cup C_2$, it restricts to $\OO_{\Pp^1}(1)$ on each component.  A global section is a pair of sections whose values agree in the common fibre at the node.  Each component contributes two parameters, and matching the values imposes one independent linear condition.  Thus $h^0=2+2-1=3$; the normalization exact sequence also gives $h^1=0$.  Cohomology and base change show that $E=p_*\omega^{-1}$ is locally free of rank three.

The bracket on $\omega^{-1}$ induces a bracket on $E$.  Since relative vector fields annihilate functions pulled back from $S$, we have $[aV,bW]=ab[V,W]$ for local functions $a,b$ on $S$ and local sections $V,W$ of $E$.  Hence this bracket is $\OO_S$-bilinear.
\end{proof}

\begin{remark}
The map \eqref{eq:injection} need not be surjective at a node.  On the nodal fibre $xy=0$, a local derivation has the form
$
x\,a(x)\partial_x+y\,b(y)\partial_y.
$
It belongs to the image of $\iota$ precisely when $a(0)+b(0)=0$. For example, $x\partial_x$ is a derivation but does not lie in the image.  We use only the image of $\iota$, which is closed under the Lie bracket.
\end{remark}

\begin{lemma}\label{lem:conic}
The evaluation map $p^*E\to\omega^{-1}$ defines the relative anticanonical morphism
\[
\Phi:\mathcal C\longrightarrow\Pp(E^*).
\]
This morphism is a closed immersion, and its image is a plane conic in every fibre.  Near a nodal fibre, an \'etale-local equation for this conic is
\begin{equation}\label{eq:conic}
XY=tZ^2.
\end{equation}
On $Z\ne0$ the three sections corresponding to $X,Y,Z$ are
\begin{equation}\label{eq:frame}
e_x=xD_0,\qquad e_y=yD_0,\qquad e_0=D_0,
\end{equation}
and they extend to a frame of $E$.
\end{lemma}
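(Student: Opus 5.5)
The argument has two parts. First I check the fibrewise statements: global generation of $\omega^{-1}$ on each fibre, very ampleness, and the conic image. Then I do an explicit local computation at the node, which yields the equation \eqref{eq:conic} and the frame \eqref{eq:frame}. Finally I globalize via cohomology and base change together with a fibrewise criterion for closed immersions.

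\emph{Fibres.} On a smooth fibre $C\simeq\Pp^1$ we have $\omega^{-1}|_C\simeq\OO(2)$. By Lemma~\ref{lem:liebundle} and base change, $E\otimes k(b)\simeq H^0(C,\OO(2))$. The evaluation map is therefore surjective, and $\Phi_b$ is the degree-two Veronese embedding onto a smooth plane conic.

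On a nodal fibre $C_1\cup C_2$, the bundle $\omega^{-1}$ restricts to $\OO(1)$ on each component. The $3$-dimensional space of sections computed in Lemma~\ref{lem:liebundle} restricts surjectively onto $H^0(C_i,\OO(1))$ for each $i$. Indeed, the kernel of restriction to $C_1$ consists of sections of $\OO_{C_2}(1)$ vanishing at the node, which is $1$-dimensional, and $3-1=2$. Hence $\Phi_b$ maps each $C_i$ isomorphically onto a line. The two sections vanishing on $C_2$, respectively on $C_1$, at the node separate the two components away from the node. So the image is a pair of distinct lines meeting at the image of the node, which is a reduced plane conic of rank two, and $\Phi_b$ is injective.

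To see that $\Phi_b$ is unramified at the node, I use the local model. The sections $xD_0,yD_0,D_0$ have coordinate ratios $x$ and $y$ on $Z\neq0$. Their differentials span the cotangent space $(x,y)/(x,y)^2$ of the node, so $\Phi_b$ is a closed immersion there as well.

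\emph{Globalizing.} Since $E$ commutes with base change and evaluation is fibrewise surjective, $p^*E\to\omega^{-1}$ is surjective and $\Phi$ is a well-defined $S$-morphism. It is proper, and a closed immersion on every fibre. Both source and target are flat over $S$, so the fibrewise criterion (e.g.\ \cite[Tag~01TI]{StacksProject} in the proper, flat case) shows that $\Phi$ is a closed immersion. Its image is a flat family of degree-two plane curves, so it is a relative conic in $\Pp(E^*)$, cut out locally by a single quadric.

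\emph{The nodal chart and the frame.} Near a node I use $R=A[x,y]/(xy-t)$ and the identification $\iota(\eta^{-1})=D_0$. The sections $e_x=xD_0$, $e_y=yD_0$, $e_0=D_0$ of $\omega^{-1}$ are defined on the chart. I must show that they extend to global sections of $\omega^{-1}$ over the relevant \'etale neighbourhood of the fibre and form a frame of $E$. This is the main obstacle, since the local functions $x$ and $y$ must be matched to genuine fibrewise sections.

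My approach is to work first on the central fibre. There $xD_0$ and $yD_0$ correspond to sections of $\OO(1)$ on $C_1$, respectively $C_2$, vanishing at the node and extended by zero on the other component. The section $D_0$ is a section nonvanishing at the node; it is unique up to adding multiples of the other two, after fixing the coordinates $x,y$ as ratios. These three are a basis of $H^0(C_b,\omega^{-1})$. I then lift them to $E$ near $b$ by local freeness, rechoosing the coordinates $x,y$ as the corresponding ratios of the lifted sections; this is permitted because any such ratios satisfy $xy=t'$ for a unit multiple of the smoothing parameter. Nakayama's lemma then shows the lifts form a frame near $b$. In these coordinates $X/Z=x$, $Y/Z=y$, and the relation $xy=t$ becomes $XY=tZ^2$. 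Since this is a degree-two equation vanishing on the image, it generates the ideal of the conic, which gives \eqref{eq:conic}.
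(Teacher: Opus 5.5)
Your route is the paper's. You analyse $|\omega^{-1}|$ fibre by fibre, get the closed immersion from a fibrewise criterion (the paper calls this relative very ampleness), and take $x=X/Z$, $y=Y/Z$ as ratios of a frame of $E$ lifting a central-fibre basis adapted to the two components. The step that fails is your justification of the normal form. You claim that the ratios of \emph{any} lifts satisfy $xy=t'$ with $t'$ a function on the base; this is false. Suppose $\tilde s_1,\tilde s_2,\tilde s_0$ is a good frame with $xy=t$. Then $\tilde s_1+t\tilde s_0$ lifts the same central section, but its ratio $x'=x+t$ satisfies $x'y=t(1+y)$, and the image is now cut out by $X'Y-tYZ-tZ^2$. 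For arbitrary lifts the image conic is given by $Q=aXY+(\text{all six monomials with coefficients in }\mathfrak m_b)$, with $a$ a unit. So \eqref{eq:conic} is a genuine normal-form statement, and it is exactly the point you flagged as the main obstacle.

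To repair this, you have to \emph{choose} the lifts.
\begin{itemize}
\item After an \'etale extension of the base, Hensel's lemma factors the binary part $Q(X,Y,0)$ as $\ell_1\ell_2$ with $\ell_1\equiv X$ and $\ell_2\equiv aY$ modulo $\mathfrak m_b$. This works because the reduction $a(b)XY$ has distinct linear factors.
\item Writing $Q=\ell_1\ell_2+(\alpha\ell_1+\beta\ell_2)Z+cZ^2$ gives $Q=(\ell_1+\beta Z)(\ell_2+\alpha Z)-t'Z^2$ with $t'=\alpha\beta-c$.
\item To get $t'=ut$ with $u$ a unit, use that the ideal $(t)$ is intrinsic. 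It cuts out the image of the relative singular locus, since the first Fitting ideal of $\Omega_{R/A}$ is $(x,y)$ and $R/(x,y)\simeq A/(t)$. Hence $(t')=(t)$ in the local ring, and rescaling the first coordinate gives $XY=tZ^2$.
\item You also identify the $Z$-section with $D_0$ only on the central fibre; this has to hold over the whole family. The paper checks that $xD_0$, $yD_0$, $D_0$ are regular on the other charts and independent on every fibre, then applies Nakayama. Near $[1:0:0]$, with $z'=Z/X$, one finds $D_0=-z'\partial_{z'}$, $xD_0=-\partial_{z'}$ and $yD_0=-tz'^2\partial_{z'}$.
\end{itemize}
The paper's own proof is brief at the normal-form step (``flatness and a rescaling of the smoothing parameter''). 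Still, it gets that form by choosing coordinates, not by asserting that every lift works. With these points supplied, your argument coincides with the paper's.
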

\begin{proof}
The line bundle $\omega^{-1}$ has degree $2$ on a smooth fibre and degree $1$ on each component of a reducible fibre.  Thus, on a smooth fibre $C\simeq\Pp^1$, it is $\OO_{\Pp^1}(2)$, and the complete linear system gives the quadratic Veronese embedding
$
\Pp^1\hookrightarrow\Pp^2.
$
Its image is a smooth plane conic.  On a reducible fibre $C=C_1\cup C_2$, the restriction to each component is $\OO_{\Pp^1}(1)$.  A section is a pair of linear sections on $C_1$ and $C_2$ with the same value at the node.  Thus each component maps isomorphically to a line, and the two lines meet at the image of the node.  The image is a nodal plane conic.

Since $H^1(C,\omega^{-1}|_C)=0$ on every fibre, cohomology and base change show that the evaluation map $p^*E\to\omega^{-1}$ is surjective and that it gives the complete linear system on every fibre.  These linear systems are very ample on the smooth fibres and separate the two components and the node on a reducible fibre.  Thus $\omega^{-1}$ is relatively very ample.  Hence $\Phi$ is a closed immersion.

Near a nodal fibre, choose coordinates in the target plane so that the two components of the central conic are $X=0$ and $Y=0$.  Flatness and a rescaling of the smoothing parameter then put the family in the standard form
\[
XY=tZ^2.
\]
On the chart $Z\ne0$, put $x=X/Z$ and $y=Y/Z$; the equation becomes $xy=t$.  The local generator $D_0=x\partial_x-y\partial_y$ of the image of $\omega^{-1}$ gives the sections
\[
e_x=xD_0,\qquad e_y=yD_0,\qquad e_0=D_0.
\]
They are regular on the two standard charts of the nodal curve.  When $t\ne0$, they correspond to the three standard quadratic sections $x$, $1$, and $y=t/x$ of $\OO(2)$.  When $t=0$, they correspond to the pairs of linear sections $(x,0)$, $(1,1)$, and $(0,y)$ on the two components; these pairs agree at the node and form a basis of the three-dimensional space of global sections.  They are therefore linearly independent on every fibre, and Nakayama's lemma shows that they form a local frame of $E$.
\end{proof}

\section{A regular additive action on the incidence source}\label{sec:action}

\subsection{The intrinsic tensor and its extension}
Let $T=T_{\Pp^1}$.  If $E$ and $F$ are vector bundles on a variety $X$, their external tensor product is
$
E\boxtimes F:=\operatorname{pr}_1^*E\otimes\operatorname{pr}_2^*F
$
on $X\times X$, where $\operatorname{pr}_1$ and $\operatorname{pr}_2$ are the two projections.  Thus
$
T\boxtimes T=\operatorname{pr}_1^*T\otimes\operatorname{pr}_2^*T,
$
whose fibre at an ordered pair $(a,b)$ is
$
T_a\Pp^1\otimes T_b\Pp^1.
$
The symbol $\boxtimes$ indicates that the factors are pulled back from the two copies of the curve.  In contrast, $T\otimes T$ is a bundle on the curve itself; the tensor below is defined on pairs $(a,b)$, whereas $T\otimes T$ on $\Pp^1$ has fibre $T_a\Pp^1\otimes T_a\Pp^1$ at a single point $a$.

On a smooth projective line, the formula
\begin{equation}\label{eq:q}
q(a,b)=(a-b)^2\frac{\partial}{\partial a}\otimes\frac{\partial}{\partial b}
\end{equation}
defines a global section of $T\boxtimes T$ on $\Pp^1\times\Pp^1$.  It is invariant under fractional linear changes of coordinate, and hence defines the same tensor on every smooth fibre.  Along the diagonal $\Delta=\{a=b\}$, the bundle $T\boxtimes T$ restricts canonically to $T^{\otimes2}$, and the factor $(a-b)^2$ shows that $q$ vanishes to order two along $\Delta$.  After contraction with the contact differential, this is the double zero of the induced vector field at the fixed endpoint.

\begin{lemma}\label{lem:q-pole}
On the smooth-domain locus of $\mathcal C\times_S\mathcal C$, the tensor $q$ has, in the conic model \eqref{eq:conic}, the expression
\begin{equation}\label{eq:qconic}
q(a,b)=\frac{x_ay_b+y_ax_b-2t}{t}\,D_{0,a}\otimes D_{0,b}.
\end{equation}
Consequently, the tensor $tq$ extends regularly across the locus $t=0$, where $t$ is the actual element appearing in the local nodal equation $xy=t$.
\end{lemma}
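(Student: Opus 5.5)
We prove the formula by direct computation in the conic chart: express the invariant tensor \eqref{eq:q} in a convenient affine coordinate on a smooth fibre, then rewrite it in terms of the vector field $D_0$ and the functions $x,y$ from Lemma~\ref{lem:conic}. Fix a point of $S$ with $t\ne0$, so that the fibre is the smooth conic $xy=t$ in the chart $Z\ne0$. The affine coordinate $x$ is a coordinate on $\Pp^1$ away from two points. Since $q$ is invariant under fractional linear changes, we may compute it in this coordinate:
\[
q=(x_a-x_b)^2\,\partial_{x_a}\otimes\partial_{x_b}.
\]
On the curve $xy=t$, with $y=t/x$, the derivation $D_0=x\partial_x-y\partial_y$ acts on the coordinate $x$ as $D_0(x)=x$. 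Hence, as a vector field on the fibre, $D_0=x\,\partial_x$, and so $\partial_{x}=x^{-1}D_0$. Substituting gives
\[
q=\frac{(x_a-x_b)^2}{x_ax_b}\,D_{0,a}\otimes D_{0,b}.
\]
Expanding the numerator,
\[
\frac{(x_a-x_b)^2}{x_ax_b}=\frac{x_a}{x_b}+\frac{x_b}{x_a}-2 .
\]
Using $1/x_b=y_b/t$ and $1/x_a=y_a/t$, this becomes
\[
\frac{x_ay_b+x_by_a-2t}{t},
\]
which is \eqref{eq:qconic}.

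The same identity holds on the other chart, and at the points where $x=0$ or $x=\infty$, by the $x\leftrightarrow y$ symmetry of the formula. Both sides are rational sections of $T\boxtimes T$ that agree on a dense open subset of each smooth fibre. They therefore agree on the whole smooth-domain locus of $\mathcal C\times_S\mathcal C$ restricted to $t\ne0$.

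For the extension, multiply by $t$:
\[
tq=(x_ay_b+y_ax_b-2t)\,D_{0,a}\otimes D_{0,b}.
\]
The coefficient is a polynomial in $x_a,y_a,x_b,y_b,t$, hence regular. By Lemma~\ref{lem:liebundle}, $D_0$ is the local generator of $\iota(\omega^{-1})\subset T_{\mathcal C/S}$ and is regular across the node. So $tq$ is a regular section of $\iota(\omega^{-1})\boxtimes\iota(\omega^{-1})$ on the chart $Z\ne0$ of the fibre product.

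\textbf{Main obstacle: regularity on the other charts and uniqueness of the extension.} On the other charts of $XY=tZ^2$, for instance $X\ne0$ with coordinates $Z/X$ and $Y/X$, one must repeat the computation in homogeneous form. There I would write the coefficient as
\[
\frac{X_aY_b+Y_aX_b-2tZ_aZ_b}{Z_aZ_b},
\]
and use the frame $e_x,e_y,e_0$ of \eqref{eq:frame}. In that frame
\[
tq=(X_aY_b+Y_aX_b-2tZ_aZ_b)\cdot(\text{frame generators}),
\]
which is a bilinear expression in the sections of $E$ and visibly regular everywhere. For uniqueness of the extension, $t$ is a non-zero-divisor by the argument in Lemma~\ref{lem:linebundle}, and the locus $t\ne0$ is dense. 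Hence the regular extension of $tq$ is unique.
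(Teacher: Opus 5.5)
Your proposal is correct and is essentially the paper's proof: the same substitution $y=t/x$, $D_0=x\partial_x$ identifies \eqref{eq:qconic} with $(x_a-x_b)^2\partial_{x_a}\otimes\partial_{x_b}$, only run in the opposite direction, since you start from $q$ and the paper starts from the conic expression. Your homogeneous treatment of the other charts is the content of the paper's remark that the numerator is regular in the frame \eqref{eq:frame}. Concretely, $tq=e_{x,a}\otimes e_{y,b}+e_{y,a}\otimes e_{x,b}-2t\,e_{0,a}\otimes e_{0,b}$, where $e_x,e_y,e_0$ are sections of $\omega^{-1}$ that are regular on the whole local family, so this identity is a cleaner way to state your ``bilinear expression in the frame''.
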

\begin{proof}
For $t\ne0$ put $y=t/x$.  Then $D_0=x\partial_x$, and the right-hand side of \eqref{eq:qconic} becomes
\[
\left(\frac{x_a}{x_b}+\frac{x_b}{x_a}-2\right)x_ax_b\partial_{x_a}\otimes\partial_{x_b}
=(x_a-x_b)^2\partial_{x_a}\otimes\partial_{x_b}.
\]
The numerator is regular in the frame \eqref{eq:frame}.  The only possible pole is therefore the factor $t^{-1}$.
\end{proof}

\begin{remark}
There is also an intrinsic description of this calculation.  In the frame \eqref{eq:frame}, the bracket satisfies
\[
[e_0,e_x]=e_x,\quad [e_0,e_y]=-e_y,\quad [e_x,e_y]=-2t e_0.
\]
The Killing form has matrix $$\left(\begin{matrix}0&-4t&0\\-4t&0&0\\0&0&2\end{matrix}\right),$$ and for $t\ne0$ the tensor $-4K^{-1}\in E\otimes E$ evaluates to $q$.  Thus the pole does not depend on the chosen frame.
\end{remark}

Let $\rho_a,\rho_b:\mathcal C\times_S\mathcal C\to\mathcal C$ be the projections.  Contracting the first factor of $q$ with $\theta\circ df$ gives, on the smooth-domain locus,
\[
\widetilde V=(\theta\circ df)_a q\in H^0\bigl(\rho_a^*F\otimes\rho_b^*\omega^{-1}\bigr).
\]

\begin{proposition}\label{prop:extend}
The section $\widetilde V$ extends regularly over $\mathcal C\times_S\mathcal C$.
\end{proposition}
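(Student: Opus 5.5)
Extending a section of a line bundle over a reduced space is a local question near the nodal locus. Away from nodal fibres, $\widetilde V$ is already regular, since $q$ is a regular section of $T\boxtimes T$ on each smooth fibre square and $\theta\circ df$ is regular. I would work étale-locally over $U=\Spec A$ near a point of $S$ with nodal fibre, using the model $xy=t$ and the conic chart \eqref{eq:conic} with frame \eqref{eq:frame}. Because $t$ is a non-zero-divisor in $A$ after shrinking (Lemma~\ref{lem:linebundle}), a section that is regular after multiplying by $t$ and whose coefficients lie in $tA$-multiples is regular. So it suffices to show that the pole $t^{-1}$ in Lemma~\ref{lem:q-pole} is cancelled by a factor of $t$ coming from the contact differential.

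First I rewrite $(\theta\circ df)_a$ in terms of $D_{0,a}$. Under $\iota:\omega^{-1}\hookrightarrow T_{\mathcal C/S}$ (Lemma~\ref{lem:liebundle}), contracting $\theta\circ df\in\Omega^1_{\mathcal C/S}\otimes F$ with a local section of $\omega^{-1}$ is the same as pairing its image in $\omega\otimes F$ with that section. By Lemma~\ref{lem:linebundle}, that image is $p^*s$ under $\omega\otimes F\simeq p^*B$. Therefore
\[
(\theta\circ df)(D_0)=\langle p^*s,\eta^{-1}\rangle,
\]
which is a function pulled back from the base, namely $a\in A$ in a trivialisation $s=ae$ of $B$ twisted by $\eta$. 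In other words, the contraction of the contact differential with the generator $D_0$ of $\omega^{-1}$ is the constant-on-fibres section $p^*s$ read through the evaluation isomorphism \eqref{eq:eval}.

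Next I substitute this into \eqref{eq:qconic}:
\[
\widetilde V=\frac{x_ay_b+y_ax_b-2t}{t}\,\rho_a^*(p^*s\cdot\eta^{-1})\otimes D_{0,b},
\]
viewed as a section of $\rho_a^*F\otimes\rho_b^*\omega^{-1}$. Lemma~\ref{lem:linebundle} gives $s|_U=t\,s'$ with $s'$ regular. The $t$ then cancels, and
\[
\widetilde V=(x_ay_b+y_ax_b-2t)\,(p^*s')_a\,\eta_a^{-1}\otimes D_{0,b},
\]
where every factor is regular on the chart $Z\neq0$ of both factors. On the other charts of the fibre product, namely where one point lies near infinity on a branch, I use the same argument with the other affine coordinates of the conic model. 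Alternatively, I use the frame-independent description $-4K^{-1}$: the only possible pole is $t^{-1}$ and the numerator is regular, by Lemma~\ref{lem:q-pole}. Near points away from the node, $D_0$ still generates $\omega^{-1}$ up to a unit on the chart. Finally, the extensions obtained on different étale charts agree on the dense smooth-domain locus. Since the total space is reduced (it is flat over the reduced $S$ with reduced fibres), they glue to a global regular section on $\mathcal C\times_S\mathcal C$.

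The main obstacle is the first step. I need to check carefully that $(\theta\circ df)(\iota(\eta^{-1}))$ equals the pullback of $s$, including at the node, where $\iota$ is not surjective and one cannot simply identify $\omega^{-1}$ with the tangent sheaf. The key point is that $\rho\otimes\mathrm{id}_F$ is dual to $\iota$, so the pairing is computed through $\rho$. I would verify this directly in the model: writing $\theta\circ df=(\alpha\,dx+\beta\,dy)\otimes\phi$, one gets $D_0$-contraction $\alpha x-\beta y$, while $\rho$ maps the form to $(\alpha x-\beta y)\eta$, so the two agree. A secondary point is that the result does not depend on the choice of $t$: replacing $t$ by $ut$ rescales $s'$ by $u^{-1}$ and $tq$ by $u$, so the product is invariant, as anticipated at the end of the proof of Lemma~\ref{lem:linebundle}.
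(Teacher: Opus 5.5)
Your proposal is correct and is essentially the paper's argument. Locally $\widetilde V=(s/t)\,Q$, where $Q$ comes from the regular tensor $tq$ of Lemma~\ref{lem:q-pole}; the factor $t$ cancels by the divisibility $s=t\,s_1$ of Lemma~\ref{lem:linebundle}; and the local extensions glue because the flat two-point source is reduced and the smooth-domain locus is dense. Your duality check $(\theta\circ df)(\iota(\eta^{-1}))=\langle p^*s,\eta^{-1}\rangle$ makes explicit a step the paper leaves implicit, and your ``frame-independent'' alternative, i.e.\ $tq=e_x\otimes e_y+e_y\otimes e_x-2t\,e_0\otimes e_0$ in $E\otimes E$, is the right way to cover the points with $Z=0$, where $D_0$ does not generate $\omega^{-1}$.
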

\begin{proof}
Under the identification $F\otimes\omega\simeq p^*B$ from Lemma~\ref{lem:linebundle}, the relative contact differential is represented by the section $s$.  By Lemma~\ref{lem:q-pole}, $tq$ is regular.  Locally, the section $\widetilde V$ is given by
$
\widetilde V=\frac{s}{t}\,Q,
$
where $Q$ is a regular tensor.

Lemma~\ref{lem:linebundle} also shows that the coefficient of $s$ is divisible by $t$.  In a local frame of $B$, we may therefore write $s=t\,s_1$ with $s_1$ regular.  Hence
$
\widetilde V=s_1Q
$
is regular across the nodal fibre.

To check compatibility, take a common \'etale refinement of two such nodal
charts.  The refinement is flat over the reduced base, and the two-point
source is flat over that base; hence every irreducible component of the
refinement meets the smooth-domain locus.  The two local expressions agree
there, because both restrict to the canonical section $\widetilde V$ on the
smooth locus.  The two-point source is reduced: it is flat and of finite
presentation over the reduced base, and its geometric fibres are products of
reduced nodal curves, hence geometrically reduced.  Since the target is
locally free, equality on the dense smooth locus gives equality on the whole
refinement.  Faithfully flat descent then shows that the local sections glue
to a global regular section on
$\mathcal C\times_S\mathcal C$.
\end{proof}

\subsection{Fixing the first endpoint}
Fix a balanced transverse curve $C_0\subset Y$ and a point $z\in Y$ lying on $C_0$.  Let $f_0:C_0\hookrightarrow Y$ denote the inclusion map, and let $a_0\in C_0$ be the source point satisfying
$
 f_0(a_0)=z.
$
Let $\mathcal U\to\MM$ be the universal domain over the stable-map component $\MM$ introduced in Section~\ref{sec:stable}, and let
$\operatorname{ev}:\mathcal U\to Y$ be its universal evaluation map.  Define $I_z$ to be the reduced closure in $\mathcal U$ of the irreducible component of $\operatorname{ev}^{-1}(z)$ containing $(f_0,a_0)$.  Thus a point of $I_z$ is a stable map together with a first source point $a$ whose image is $z$.

We use one and the same ordered two-point source throughout this section:
\[
 \mathcal C_{I_z}:=\mathcal U\times_{\MM} I_z
 \longrightarrow I_z.
\]
A point of $\mathcal C_{I_z}$ is therefore a triple $(f,a,b)$, where $f$ is a stable map, $a$ and $b$ are (possibly equal) points on its source curve, and $f(a)=z$.  In particular, $b$ is allowed to be a node or to coincide with $a$; no extra contracted component is inserted to stabilize the two points.  This is the source on which the constructions below are made.  It is deliberately the relative curve just defined, rather than the two-marked stable-map compactification: the latter inserts a contracted component when the two marks collide or meet a node.  On an etale atlas of $I_z$, this agrees with the pullback notation used in the preceding local calculations.

The image of the first point is fixed, so the factor $\rho_a^*F$ is the fixed one-dimensional vector space $L_z$.  Choose $\ell\in L_z\setminus\{0\}$ and a linear functional $\ell^*:L_z\to\C$ with $\ell^*(\ell)=1$.  Contraction of the extended section with $\ell^*$ in the first factor gives
\begin{equation}\label{eq:V}
V=(\ell^*\otimes1)\widetilde V\in H^0(\mathcal C_{I_z},\omega^{-1}_{\mathcal C_{I_z}/I_z}).
\end{equation}

\begin{proposition}\label{prop:Ga}
Under the inclusion $\omega^{-1}\hookrightarrow T_{\mathcal C_{I_z}/I_z}$, the section $V$ is a relative derivation.  It integrates to an algebraic $\Ga$-action on $\mathcal C_{I_z}$ over $I_z$.  The action fixes the first-point section.  On a smooth transverse domain, its infinitesimal generator has a double zero at the first point and is nonzero elsewhere.
\end{proposition}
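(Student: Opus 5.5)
We prove the four assertions in order, working étale-locally on $I_z$ and gluing at the end as in the proof of Proposition~\ref{prop:extend}.

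\emph{Relative derivation.} By \eqref{eq:V}, $V$ is a section of $\omega^{-1}$ on $\mathcal C_{I_z}$, viewed as a family of curves in the variable $b$ over the base $I_z$. Composing with the injection $\iota$ of Lemma~\ref{lem:liebundle} gives a section of $T_{\mathcal C_{I_z}/I_z}$, that is, an $\OO_{I_z}$-linear derivation of the structure sheaf of the curve. At a node it is a multiple of $D_0=x\partial_x-y\partial_y$, so it preserves the ideal $(x,y)$ of the node.

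\emph{Integration to a $\Ga$-action.} Fibrewise, $V$ is a global section of $\omega^{-1}$, hence an element of the rank-three bundle $E=p_*\omega^{-1}$ pulled back to $I_z$. Lemma~\ref{lem:liebundle} gives $E$ a Lie bracket. Via the closed immersion $\Phi$ of Lemma~\ref{lem:conic}, $E$ acts on $\Pp(E^*)$ by traceless endomorphisms preserving the conic. In the frame \eqref{eq:frame} the bracket is $\mathfrak{sl}_2$-type over $t\neq0$ and degenerates at $t=0$. The key point is that $\operatorname{ad}V$ is nilpotent, so $V$ acts on $E^*$ by a nilpotent endomorphism $N$.

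On a smooth fibre, contracting $q$ from \eqref{eq:q} in the first slot at the fixed point $a$ gives the vector field $(a-b)^2\partial_b$. In a coordinate with $a=\infty$ this is a constant times $\partial_b$, a translation, so $N$ is nilpotent with $N^3=0$. Nilpotency is a closed condition, namely the vanishing of the coefficients of the characteristic polynomial of $N$. Since $I_z$ is reduced and the smooth-domain locus is dense in it, nilpotency holds on all of $I_z$.

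Then $\exp(\lambda N)=1+\lambda N+\tfrac{\lambda^2}{2}N^2$ is polynomial in $\lambda$. It defines an algebraic $\Ga$-action on $\Pp(E^*)$ over $I_z$, by a homomorphism $\Ga\to\mathrm{GL}(E^*)$ satisfying the group law. This action preserves the image of $\Phi$: it holds on the dense smooth locus, where it is the flow of a vector field tangent to the conic, and closedness plus reducedness of $\mathcal C_{I_z}$, as in the proof of Proposition~\ref{prop:extend}, extends it everywhere. Restricting to $\Phi(\mathcal C_{I_z})\simeq\mathcal C_{I_z}$ gives the action, and its infinitesimal generator is $V$ by construction.

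\emph{Fixing the first-point section.} On smooth fibres $\widetilde V$ vanishes at $b=a$, because $q$ vanishes to order two along the diagonal. The locus where $V$ vanishes on the section $b=a$ is closed and contains the dense smooth part, so by reducedness $V$ vanishes along the whole first-point section. A vector field vanishing along a section generates a flow fixing that section pointwise.

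\emph{Zeros on a smooth transverse domain.} Here the generator is $\ell^*\bigl((\theta\circ df)_a\bigr)\cdot(a-b)^2\partial_b$. Transversality makes $(\theta\circ df)_a$ nonzero, and $\ell^*$ is an isomorphism $L_z\to\C$, so the coefficient is a nonzero constant. Hence the field has exactly a double zero at $b=a$, since $(a-b)^2\partial_b$ vanishes only on the diagonal in the affine chart, and at $b=\infty$ the coordinate change $u=1/b$ gives $-(au-1)^2\partial_u\neq0$.

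The main obstacle is nilpotency on the boundary fibres. Extending it by closedness needs the reducedness and density arguments, and on a nodal fibre we must also check that the resulting unipotent group preserves the nodal conic $XY=0$ and does not move it off the family. I would first verify this in the frame \eqref{eq:frame} directly, writing $V=\alpha e_x+\beta e_y+\gamma e_0$, with nilpotency forcing $\gamma^2+4t\alpha\beta=0$, up to normalization.
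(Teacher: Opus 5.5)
Your proposal is correct and follows essentially the same route as the paper. Both arguments take $N=\operatorname{ad}V$ on $E=p_*\omega^{-1}$, prove $N^3=0$ on the smooth-domain locus (your translation $c\,\partial_b$ with $a=\infty$ is the paper's $c\,u^2\partial_u$ with $a$ at $u=0$), extend it over the reduced irreducible $I_z$ by density, exponentiate polynomially as $1+\tau N+\tfrac{\tau^2}{2}N^2$ on $\Pp(E^*)$, and count zeros explicitly. The one difference is that you get conic preservation and the fixed first-point section from closedness on reduced spaces, whereas the paper also checks these directly on nodal fibres via the matrix $M$: it shows $N$ annihilates $e_xe_y-te_0^2$ and kills the evaluation functional at the first point. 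Also, in the frame \eqref{eq:frame} your closing nilpotency condition should read $\gamma^2-4t\alpha\beta=0$, not $\gamma^2+4t\alpha\beta=0$.
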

\begin{proof}
Let $E_{I_z}=p_*\omega^{-1}$, a vector bundle of rank three by Lemma~\ref{lem:liebundle}.  The image of $\omega^{-1}$ is closed under Lie brackets, so $V$ defines the endomorphism
\[
N=\operatorname{ad}(V):E_{I_z}\longrightarrow E_{I_z},
\qquad N(W)=[V,W].
\]

First consider the smooth-domain locus.  Choose a coordinate $u$ with the first point at $u=0$.  The double zero of $V$ then gives
\[
V=c\,u^2\partial_u,
\]
where $c$ is a regular local coefficient, and $c\ne0$ at points
corresponding to smooth transverse domains.

In the basis
$
\partial_u,$ $u\partial_u,$ $u^2\partial_u
$
of the three-dimensional space of vector fields on $\Pp^1$, the operator $N=\operatorname{ad}(V)$ satisfies
$
N^3=0.
$

We next verify this identity on a nodal fibre.  In the local model $xy=t$, write the first point as $(A,B)$, so that $AB=t$.  The expression for $V$ is
$
V=h(Ay+Bx-2t)D_0
$
for a regular scalar $h$.  A direct calculation in the frame \eqref{eq:frame} gives $N=hM$, where
\[
M=\begin{pmatrix}-2t&0&-B\\0&2t&A\\2At&-2Bt&0\end{pmatrix},
\qquad M^3=0.
\]
Thus $N^3=0$ in the nodal model as well.  Since the base is reduced and every irreducible component contains a dense smooth-domain locus,  $N^3=0$ holds on all of $I_z$.

The relative conic is defined in $\Pp(E_{I_z}^*)$ by the quadratic relation
$
e_xe_y-te_0^2=0.
$
The endomorphism $N$ acts as a derivation on the symmetric algebra $\operatorname{Sym}(E_{I_z})$.  The matrix above shows that this derivation annihilates the quadratic relation.  The resulting linear transformations therefore preserve the relative conic.

Since $N^3=0$, the exponential is the polynomial
\[
G_\tau=1+\tau N+\frac{\tau^2}{2}N^2.
\]
The nilpotence relation gives
$
G_{\tau+\upsilon}=G_\tau G_\upsilon,
$
so $G_\tau$ defines an algebraic action of the additive group $\Ga$ on the relative conic.  Since $\Pp(E_{I_z}^*)$ parametrizes lines in the dual bundle, this action is given by
$
[\lambda]\longmapsto[\lambda\circ G_\tau].
$

At the first point, the evaluation functional is killed by $N$.  Hence its line in $E_{I_z}^*$ is unchanged by $G_\tau$, and the first-point section is fixed.  On a smooth transverse fibre, $V=c u^2\partial_u$ with $c\ne0$.  This field has a double zero at $u=0$ and no other zero.
\end{proof}

\subsection{The projective source and evaluation}
Let $U_z$ be the reduced irreducible component of the coarse space of $\mathcal C_{I_z}$ that contains the smooth locus of triples $(C,z,w)$ with $C$ balanced transverse and $w\ne z$.  Let
\[
\nu:W_z\longrightarrow U_z
\]
be its normalization.  Thus $W_z$ is normal and $\nu$ is finite and birational.

\begin{lemma}\label{lem:projective}
The variety $W_z$ is projective.  Evaluation at the second point descends to a morphism
\begin{equation}\label{eq:evalmap}
E_z:W_z\longrightarrow Y.
\end{equation}
\end{lemma}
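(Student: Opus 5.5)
The plan is to realize $U_z$ as a closed subvariety of a projective coarse space, deduce projectivity of $W_z$ from the finiteness of normalization, and then descend the universal evaluation map. First, by the projectivity of the coarse moduli space of $\overline{M}_{0,0}(Y,\beta)$ quoted in Section~\ref{sec:stable}, the coarse space of $\MM$ is projective. The universal curve $\mathcal U\to\MM$ has coarse space $\overline{M}_{0,1}(Y,\beta)$ (or the corresponding component of it). The stack $\mathcal C_{I_z}=\mathcal U\times_{\MM}I_z$ maps, via $(f,a,b)\mapsto\bigl((f,a),(f,b)\bigr)$, to $\mathcal U\times_{\MM}\mathcal U$. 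This fibre product is proper over $\MM$, because $\mathcal U\to\MM$ is proper. Hence its coarse space is proper over the projective coarse space of $\MM$. To get projectivity rather than mere properness, I would use that the coarse space of $\overline{M}_{0,2}(Y,\beta)$ is projective and that the forgetful map to $\mathcal U\times_{\MM}\mathcal U$ is proper and surjective. Alternatively, and more simply, I would pull back the relatively very ample bundle $\omega^{-1}$ of Lemma~\ref{lem:conic}. Via $\Phi$, each fibre of $\mathcal C_{I_z}\to I_z$ embeds in $\Pp(E_{I_z}^*)$, so the two-point source embeds in a $\Pp^2$-bundle over $I_z$. A suitable tensor power of $\omega^{-1}$ twisted by an ample class from the base then descends, after raising to a power killing the finite stabilizers, to an ample line bundle on the coarse space. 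Since $I_z$ is closed in $\mathcal U$, its coarse space is projective, and so is that of $\mathcal C_{I_z}$. Therefore $U_z$, a closed irreducible component, is projective.

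Next, the normalization $\nu:W_z\to U_z$ is finite. The pullback of an ample line bundle under a finite morphism is ample, so $W_z$ is projective. For the evaluation map, the morphism $f\circ\rho_b:\mathcal C_{I_z}\to Y$ is invariant under automorphisms of stable maps, since automorphisms commute with $f$. As $Y$ is a scheme, it therefore factors uniquely through the coarse space, by the universal property of coarse moduli spaces (Keel--Mori). Restricting to $U_z$ and composing with $\nu$ gives $E_z$.

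The main obstacle is the projectivity of the coarse space of the ad hoc two-point source, which is not itself a stable-map space. I would handle it most cleanly by the finite-group-quotient argument. Étale locally on the coarse space of $\MM$, the stack is $[S/G]$, and the two-point source is a closed subscheme of $\mathcal C\times_S\mathcal C$. A $G$-linearized relatively ample bundle, namely $\rho_a^*\omega^{-1}\otimes\rho_b^*\omega^{-1}$, is canonical and hence linearized. Some power of it descends to the coarse space and is relatively ample over the coarse space of $\MM$. Combining it with the pullback of an ample bundle from that projective coarse space gives ampleness.
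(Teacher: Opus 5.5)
Your argument is correct and is essentially the paper's: a relatively ample line bundle on the universal curve, a tensor power descending to the coarse space and combined with an ample class from the projective coarse space of $\MM$, passage to the closed component $U_z$, finiteness of the normalization, and the universal property of the coarse space for the second evaluation. The only real difference is that you use $\omega^{-1}$, which is relatively very ample here by Lemma~\ref{lem:conic} because Lemma~\ref{lem:fibre} excludes contracted components, where the paper uses the standard $\omega_{\mathcal U/\MM}\otimes\operatorname{ev}^*L^{\otimes m}$; you should drop the $\overline{M}_{0,2}(Y,\beta)$ aside, since being the image of a projective variety under a proper surjection does not by itself give projectivity (by Chow's lemma, every proper variety is such an image).
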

\begin{proof}
The universal domain $\mathcal U\to\MM$ is projective.  Indeed, for
$m\ge2$ the line bundle
$\omega_{\mathcal U/\MM}\otimes \operatorname{ev}^*L^{\otimes m}$ has positive
degree on every component of every stable map: its degree is $-2+2m$ on a
smooth degree-two fibre, $-1+m$ on a degree-one component of a reducible
fibre, and at least $1$ on a contracted component by stability.  It is
therefore relatively ample; this is also the standard projective presentation
of the universal stable-map curve, cf.\ \cite{FultonPandharipande97,BehrendManin96}.
The component $\MM$ has projective coarse space, and $I_z\subset\mathcal U$ is
a closed substack.  After taking a positive tensor power that descends to the
coarse space, the two-point source
$\mathcal C_{I_z}=\mathcal U\times_{\MM}I_z$ has a projective coarse space,
and so does its closed irreducible component $U_z$.

The normalization of the projective variety $U_z$ is finite, hence $W_z$ is projective.  The evaluation at the second point is the restriction of the universal map from the factor $\mathcal U$ in $\mathcal C_{I_z}$.  It is invariant under isomorphisms of stable maps and therefore descends to $U_z$; composing with the normalization map gives the morphism $E_z$.
\end{proof}

\begin{lemma}\label{lem:action-normalization}
The action in Proposition~\ref{prop:Ga} descends to the coarse space and lifts uniquely to an algebraic action on $W_z$.
\end{lemma}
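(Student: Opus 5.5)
We carry out the argument in two steps. First, the $\Ga$-action on the stack-level two-point source $\mathcal C_{I_z}\to I_z$ descends to the coarse space and preserves the component $U_z$. Second, the descended action lifts to the normalization $W_z$ by the universal property of normalization.

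\emph{Descent to the coarse space.} The action of Proposition~\ref{prop:Ga} is the polynomial exponential $G_\tau=1+\tau N+\tfrac{\tau^2}{2}N^2$ of $N=\operatorname{ad}(V)$. Here $V$ is built from the canonical tensor $\widetilde V$ of Proposition~\ref{prop:extend}, contracted with the fixed functional $\ell^*$ on $L_z$. So the first task is to show that this construction commutes with isomorphisms of stable maps.

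Let $\phi$ be an isomorphism of pointed stable maps $(f,a)\simeq(f',a')$, or an automorphism of one of them. On the smooth-domain locus, $\phi$ carries $q$ to $q$, since $q$ is invariant under fractional linear transformations. It also carries $\theta\circ df$ to $\theta\circ df'$, since $f'\circ\phi=f$. Hence $\phi$ carries $\widetilde V$ to $\widetilde V'$. Both sides are regular sections of a locally free sheaf on a reduced source that agree on a dense open set, so $\phi_*\widetilde V=\widetilde V'$ holds everywhere. This is the same density argument used in the compatibility step of Proposition~\ref{prop:extend}.

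Consequently $V$, $N$ and $G_\tau$ are equivariant for the groupoid $R=S\times_{\MM}S\rightrightarrows S$ of an étale atlas. The morphism $\Ga\times\mathcal C_{I_z}\to\mathcal C_{I_z}$ is therefore a morphism of stacks over $I_z$. Coarse moduli spaces are categorical, and the coarse space of $\Ga\times\mathcal C_{I_z}$ is $\Ga\times(\text{coarse space})$ because $\Ga$ is flat and carries trivial stabilizers; this follows from Keel--Mori's compatibility with flat base change. Hence the action descends to an algebraic $\Ga$-action on the coarse space of $\mathcal C_{I_z}$. This equivariance-and-descent step is where I expect the main care to be needed: one must check that the coarse space commutes with the product by $\Ga$, and that descending $\widetilde V$ uses only reducedness and density.

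\emph{Preservation of $U_z$ and lifting.} $\Ga$ is connected, and each $G_\tau$ is an automorphism of the coarse space. The action therefore maps each irreducible component to itself: the orbit map $\Ga\times U_z\to(\text{coarse space})$ has irreducible image containing $U_z$, and that image lies in the closure of the union of components through $U_z$. More directly, the image of $\Ga\times U_z$ is an irreducible closed set containing the component $U_z$, so by maximality it equals $U_z$. Since $U_z$ is given its reduced structure and $\Ga\times U_z$ is reduced, the action restricts to $\alpha:\Ga\times U_z\to U_z$.

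Now $\Ga\times W_z$ is normal, being a product of a smooth and a normal variety over $\C$. The map $\Ga\times W_z\to\Ga\times U_z$ is the normalization, since it is finite, birational and has normal source. The composite $\alpha\circ(\mathrm{id}\times\nu):\Ga\times W_z\to U_z$ is dominant, so by the universal property of normalization it factors uniquely through $\nu$. This gives $\tilde\alpha:\Ga\times W_z\to W_z$.

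The action axioms for $\tilde\alpha$, namely $\tilde\alpha(0,\cdot)=\mathrm{id}$ and $\tilde\alpha(\tau+\upsilon,w)=\tilde\alpha(\tau,\tilde\alpha(\upsilon,w))$, hold after composing with $\nu$. Since $\nu$ is birational and separated, and the sources are reduced and irreducible, they agree on a dense open set and hence everywhere. The same reasoning gives uniqueness of the lift, because any two lifts agree after composing with $\nu$.
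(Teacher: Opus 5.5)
Your proof is correct and follows essentially the same route as the paper. Both arguments show that the construction is equivariant under isomorphisms of stable maps, descend the action via Keel--Mori using that the coarse space of $\Ga\times\mathcal C_{I_z}$ is $\Ga$ times the coarse space, get invariance of $U_z$ from the connectedness of $\Ga$, and obtain the lift to $W_z$ and its group law from uniqueness in the universal property of normalization. The one slip is calling the image of $\Ga\times U_z$ closed: take its closure instead, which is irreducible and contains $U_z$, hence equals $U_z$ by maximality.
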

\begin{proof}
The construction in Proposition~\ref{prop:Ga} commutes with isomorphisms of stable maps.  In characteristic zero, the relevant Deligne--Mumford stacks are tame.  We use the standard base-change property for tame coarse moduli spaces (in particular, the coarse space of $\Ga\times\mathcal C_{I_z}$ is $\Ga\times$ the coarse space of $\mathcal C_{I_z}$), together with the coarse-space theorem of Keel--Mori \cite{KeelMori97,AOV08,StacksProject}; hence the action descends to the coarse space of $\mathcal C_{I_z}$.  The component $U_z$ is invariant: it contains a dense smooth incidence locus, and the connected group $\Ga$ cannot permute this component with a different irreducible component.

For each $\tau\in\Ga$, the descended map sends the generic point of $U_z$ to the generic point, because $G_\tau$ is invertible.  Thus the induced map
$
\Ga\times W_z\to U_z
$
is dominant.  Since $\Ga\times W_z$ is normal, the universal property of normalization gives a unique lift
\[
\Ga\times W_z\longrightarrow W_z.
\]
Uniqueness of the lift implies the group law, so the lift defines an algebraic $\Ga$-action on $W_z$.
\end{proof}

\begin{lemma}\label{lem:etale}
If $C$ is a balanced transverse curve in the chosen local incidence family and $w\in C\setminus\{z\}$, then $W_z$ is smooth at $(C,z,w)$ and $E_z$ is \'etale there.  In particular, $E_z$ is surjective and generically finite.
\end{lemma}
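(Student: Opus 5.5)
The plan is to run standard deformation theory of stable maps at the point $[f_0]$ given by the balanced transverse curve $C$. Since $C$ is embedded and smooth with $N_{C/Y}\simeq\OO(1)^{\oplus2n}$, we have $H^1(C,N_{C/Y})=0$ and $H^1(C,T_Y|_C)=0$. Thus $\overline M_{0,0}(Y,\beta)$ is smooth at $[C]$ of dimension
\[
h^0(N_{C/Y})=4n,
\]
and the automorphism group is trivial because $C\hookrightarrow Y$ is an embedding. Near $[C]$, therefore, $\MM$ coincides with this smooth chart and the universal curve $\mathcal U\to\MM$ is a smooth $\Pp^1$-bundle. The locus of balanced transverse curves is open, so all of this holds in a neighbourhood.

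The next step describes $I_z$ near $(C,a_0)$. The evaluation map $\operatorname{ev}:\mathcal U\to Y$ has differential at $(C,a)$ fitting into
\[
H^0(N_{C/Y})\oplus T_aC\to T_{f(a)}Y.
\]
The map $H^0(N_{C/Y})\to N_{C/Y,a}$ is surjective because $\OO(1)$ is globally generated, and the tangent line covers $T_aC$. So $\operatorname{ev}$ is smooth at $(C,a)$, with fibre dimension $4n+1-(2n+1)=2n$. Hence $I_z$ is smooth of dimension $2n$ near $(C,a_0)$. Its tangent space at that point is identified, modulo the reparametrization of the marked point, with $H^0(N_{C/Y}(-a_0))\simeq\C^{2n}$, since $N(-a_0)\simeq\OO^{2n}$.

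Then $\mathcal C_{I_z}$ is a smooth $\Pp^1$-bundle over this chart, of dimension $2n+1$. Since the stack is a scheme there, it agrees with its coarse space, which is smooth and in particular normal at $(C,z,w)$. This point lies in $U_z$ by definition, so $\nu$ is an isomorphism near it and $W_z$ is smooth there.

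For étaleness, both sides have dimension $2n+1$, so it suffices to show that $dE_z$ is injective at $(C,z,w)$ with $w\ne z$. A tangent vector consists of a first-order deformation $\xi\in H^0(N_{C/Y}(-z))$ together with a motion of $w$ along $C$. Its image in $T_wY$ is the tangential motion plus $\xi(w)$ in $N_{C/Y,w}$. The evaluation map
\[
H^0(N_{C/Y}(-z))=H^0(\OO^{2n})\to N_{C/Y,w}
\]
is an isomorphism. So if the image vanishes, the normal component gives $\xi=0$, and then the tangential component also vanishes. This is where balancedness is used, and it is the main point.

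The one delicate step is the tangent-space identification: one must check that the reparametrizations fixing the marked point are correctly quotiented out. In the unmarked stack with a marked point $a$, the tangent space at $(C,a)$ is $H^0(N_C)\oplus T_aC$, and the condition $\operatorname{ev}=z$ cuts out precisely the $\xi$ with $\xi(a)=0$ in $N_a$, with the tangential part absorbed into the motion of $a$. This is a standard computation.

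Finally, $E_z$ is étale at a point of the irreducible projective variety $W_z$, which has dimension $2n+1=\dim Y$. Hence $E_z$ is dominant and generically finite. Being proper, with closed image, it is surjective onto the irreducible $Y$.
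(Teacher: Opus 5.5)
Your proposal is correct and follows essentially the same argument as the paper. It uses unobstructedness from $H^1(C,N_{C/Y})=0$, the tangent space of the two-point incidence source as $H^0(C,N_{C/Y}(-z))$ extended by $T_wC$, the key isomorphism $H^0(C,N_{C/Y}(-z))\to (N_{C/Y})_w$ coming from $N_{C/Y}(-z)\simeq\OO^{\oplus 2n}$, and then properness plus \'etaleness at one point for surjectivity and generic finiteness. Your extra remarks on trivial automorphisms of an embedded curve and on local irreducibility and normality (so that $\nu$ is an isomorphism near the point) simply make explicit the step the paper states as ``the smooth incidence chart identifies with an open subset of $W_z$.''
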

\begin{proof}
The normal bundle of $C$ is
$
N_{C/Y}\simeq\OO_{\Pp^1}(1)^{\oplus 2n}.
$
Since $H^1(C,N_{C/Y})=0$, the deformation space of $C$ in $Y$ is smooth at $C$; see \cite[Chapter~II]{Kollar96}.  The global generation of $N_{C/Y}$ implies that evaluation at a marked point is a submersion.  The infinitesimal deformations that keep the curve through $z$ are therefore
$
H^0(C,N_{C/Y}(-z)).
$
Motion of the second point along $C$ contributes the tangent space $T_wC$.

The differential of the second evaluation map restricts to the inclusion on $T_wC$.  Modulo this tangent line, it is the evaluation map
\[
H^0(C,N_{C/Y}(-z))\longrightarrow (N_{C/Y})_w,
\]
which is an isomorphism when $w\ne z$.  Indeed, for each summand $\OO(1)$, a section vanishing at $z$ is a constant multiple of the unique section of $\OO(1)(-z)\simeq\OO$, and its value at any other point $w$ is nonzero.  The assertion follows by taking $2n$ copies.

The tangent space of the source has dimension
\[
\dim H^0(C,N_{C/Y}(-z))+\dim T_wC=2n+1,
\]
which is the dimension of $Y$.  The differential of $E_z$ is therefore an isomorphism.  The smooth incidence chart near $(C,z,w)$ identifies with an open subset of $W_z$.  Thus $W_z$ is smooth there, and the isomorphism of differentials shows that $E_z$ is \'etale.

Since $W_z$ is projective, $E_z$ is proper and its image is closed.  Being \'etale at one point, it also contains an open neighbourhood of $E_z(C,z,w)$ in $Y$.  This nonempty open subset is dense because $Y$ is irreducible, and the image is closed; hence the image is all of $Y$.  Finally, $\dim W_z=\dim Y=2n+1$ and $E_z$ is dominant, so its general fibre has dimension zero; equivalently, $E_z$ is generically finite.
\end{proof}

\section{The local contact vector field}\label{sec:contactfield}

\begin{proposition}\label{prop:contact}
On every \'etale chart of $E_z$ contained in the balanced transverse locus, the transported field $(E_z)_*V$ preserves the contact distribution $D$.  Its contact Hamiltonian is nonzero at every point of the chosen curve different from $z$.
\end{proposition}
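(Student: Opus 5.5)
Over an étale chart $\Omega\subset W_z$ of $E_z$ contained in the balanced transverse locus, write $\Psi:=E_z|_\Omega:\Omega\xrightarrow{\sim}E_z(\Omega)\subset Y$. The vector field $V$ on $W_z$, meaning the generator of the $\Ga$-action from Proposition~\ref{prop:Ga} and Lemma~\ref{lem:action-normalization}, is transported by $\Psi$ to a holomorphic field $\xi=\Psi_*V$ on an open set of $Y$. The plan is to show $\mathcal L_\xi\Gamma(D)\subset\Gamma(D)$ using the incidence structure, and then to read off the Hamiltonian $\theta(\xi)$ directly from the construction of $V$ as a contraction of $\widetilde V$.

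\emph{Step 1: the Hamiltonian.} A point of $\Omega$ is $(C,z,w)$, and $E_z$ restricted to the fibre $\{(C,z,w):w\in C\}$ is the inclusion $C\hookrightarrow Y$. The $\Ga$-action preserves each source curve, so $V$ is tangent to the curve direction, and $\xi(w)=df(V(w))\in T_wC$. Hence
\[
\theta(\xi)(w)=(\theta\circ df)_b\bigl(V(w)\bigr)=\ell^*\otimes(\theta\circ df)_b\bigl((\theta\circ df)_a\,q(z,w)\bigr).
\]
On a smooth transverse domain $q(a,b)=(a-b)^2\partial_a\otimes\partial_b$ is nonzero for $a\ne b$, and $\theta\circ df$ is nowhere zero by transversality. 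The Hamiltonian is therefore nonzero at every $w\ne z$ on the chosen curve; this is also the "nonzero elsewhere" clause of Proposition~\ref{prop:Ga}.

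\emph{Step 2: contact preservation.} A field $\xi$ is contact exactly when $\theta([\xi,\eta])=0$ for all sections $\eta$ of $D$. Equivalently, the flow $\phi_\tau$ satisfies $\phi_\tau^*\theta=\lambda_\tau\theta$. Let $\gamma=\Psi\circ G_\tau\circ\Psi^{-1}$. By construction $\gamma$ maps each curve of the family through $z$ into itself, since the $\Ga$-action on $W_z$ moves only the second point along its fixed curve. Then:
\begin{enumerate}
\item[(i)] $\gamma$ preserves every member $C$ of the $2n$-parameter family of degree-two curves through $z$, and each such curve is transverse.
\item[(ii)] At a point $y$, the tangent lines $T_yC'$ of curves in the family through $z$ and $y$ have some span. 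Because $H^0(N(-z))\to N_w$ is an isomorphism (Lemma~\ref{lem:etale}), deforming the curve through the pair $z,y$ is rigid. So I would instead use first-order variations: $d\Psi$ sends $H^0(C,N_{C/Y}(-z))\oplus T_wC$ isomorphically onto $T_wY$.
\item[(iii)] In this splitting, $\mathcal L_V$ acts on a deformation section $\nu\in H^0(N(-z))$ by the Lie derivative along the vector field $V|_C$, which is a vector field on $C$ with a double zero at $z$. The key is to prove that $d\theta$-compatibility holds. The contact form induces a pairing $N_{C/Y}\supset D/TC\cap D$. Transversality gives $T_Y|_C=TC\oplus D|_C$, so $N_{C/Y}\simeq D|_C\simeq\OO(1)^{2n}$, carrying the symplectic form $d\theta:\Lambda^2D\to L$, that is, $\OO(1)\otimes\OO(1)\to\OO(2)$.
\end{enumerate}
So the contact condition becomes the claim that the flow of $V$ on $C$, lifted to $D|_C$ via the canonical linearization coming from $D|_C\simeq N_{C/Y}$ and the flow of sections, scales $d\theta$ by the factor by which it scales $L|_C\simeq TC$. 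Since $V$ acts on $TC$ and $L|_C=TC$ compatibly, and $\Lambda^{2n}D\simeq L^{n}$ forces the conformal factor, I would verify $\mathcal L_\xi\theta\in\OO\cdot\theta$ pointwise along $C$ by computing in the trivialization $D|_C=\OO(1)\otimes\C^{2n}$. In this trivialization the $V$-action is the natural $\mathfrak{sl}_2$-action on $\OO(1)$ tensored with the identity, and $d\theta$ is $\det\otimes\Omega_0$ with $\Omega_0$ constant. That form is $\mathfrak{sl}_2$-invariant up to the scaling on $\OO(2)=L$, which is exactly the contact condition.

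The main obstacle is step (iii): justifying that the differential of $\gamma$ on normal directions is the natural $\mathfrak{sl}_2$-lift. This requires that the transported flow acts on the space of curves through $z$ compatibly with the $\Ga$-action on $E_{I_z}$, that is, that $N=\operatorname{ad}V$ governs the linearization. I would attack it first via the frame computation of Proposition~\ref{prop:Ga}, extended to first-order neighbourhoods of $C$.
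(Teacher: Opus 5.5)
Your Step~1 is fine: it is the paper's observation that $\lambda(V)=u^2$ along the chosen curve, which is nonzero for $w\ne z$.

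The gap is in Step~2. You reduce contactness to the Levi form $\Lambda^2(D|_C)\to L|_C$ being equivariant under the action induced on $N_{C/Y}\simeq D|_C$. That is only a necessary condition. The induced action on $N_{C/Y}$ discards the $TC$-component of $d\phi_\tau(\eta)$ for $\eta\in D$, and that component is exactly what the contact condition constrains.

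Here is a concrete counterexample. Use the paper's coordinates $(b,u)$ on the family of curves through $z$, in which $D=\ker(du+u^2\alpha)$ with $\alpha=\sum_i a_i(b)\,db_i$. Take any function $c(b)$ and the field $c(b)u^2\partial_u$.
\begin{itemize}
\item It preserves every curve.
\item It fixes the deformation sections $[\partial_{b_i}]\in H^0(C,N_{C/Y}(-z))$ modulo $TC$, because $[c u^2\partial_u,\partial_{b_i}]$ is vertical.
\item On each fibre it is a constant multiple of $u^2\partial_u$. Hence it preserves the Levi form, which in this frame is $u^2\Omega_{ij}(b)\,\partial_u$ with $\Omega_{ij}=\partial_{b_i}a_j-\partial_{b_j}a_i$.
\end{itemize}
So this field passes your test. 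But for $\eta_i=\partial_{b_i}-u^2a_i\partial_u\in D$ one finds $\lambda([c u^2\partial_u,\eta_i])=-u^2\,\partial_{b_i}c$. The field is therefore contact only when $c$ is constant. Your argument would equally ``prove'' that $c\,V$ is contact for a nonconstant function $c$ on the base, which is false.

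Two related points:
\begin{itemize}
\item The step you flag as the main obstacle (that the normal action is the natural lift) is automatic for any curve-preserving flow. It carries no information about contactness.
\item Checking $\mathcal L_\xi\theta\in\OO\cdot\theta$ ``pointwise along $C$'' requires the transverse first jet of $\xi$. The trivialization $D|_C\simeq\OO(1)\otimes\C^{2n}$ does not see it.
\end{itemize}

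What is missing is the uniform normalization across the family that comes from contracting with a single fixed $\ell^*\in L_z^*$. You never use this in Step~2. The paper chooses $u$ so that $\theta_z(de\,\partial_u|_{u=0})=\ell$ for every $b$. Then $V=u^2\partial_u$ with constant coefficient.

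The same normalization controls the contact form. Write $\theta\circ de=(du+\sum_iA_i\,db_i)\otimes\partial_u$.
\begin{itemize}
\item Each $A_i(b,\cdot)\partial_u$ is a section of $L|_C\simeq T_C\simeq\OO(2)$, so it is at most quadratic in $u$.
\item $A_i(b,0)=0$ because $e(b,0)=z$.
\item $\partial_uA_i(b,0)=0$ because $de(\partial_{b_i})$ vanishes at $u=0$ and $\theta_z(de\,\partial_u)=\ell$ is independent of $b$.
\end{itemize}
Hence $\lambda=du+u^2\alpha$, and $\mathcal L_V\lambda=2u\lambda$ gives contactness at once. Near $u=\infty$ one has $V=\partial_v$ and $D=\ker(dv+\alpha)$. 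Your Step~2 needs this computation, or some other argument that genuinely uses the fixed $\ell$, in place of the normal-bundle equivariance.
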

\begin{proof}
Choose a small analytic neighbourhood $B$ in the family of embedded balanced transverse curves through $z$.  Trivialize the universal curve as $B\times\Pp^1$ and write the evaluation map as
\[
e:B\times\Pp^1\longrightarrow Y.
\]
The first point is the section $u=0$.  Choose the coordinate $u$ so that, for every $b\in B$, the contact form on the tangent direction of the curve at $u=0$ takes the fixed nonzero value $\ell\in L_z$.  With this trivialization of the pulled-back contact line bundle, the contact form is
\[
\beta=\left(du+\sum_i A_i(b,u)\,db_i\right)\otimes\partial_u.
\]
For each $i$, the coefficient $A_i(b,\cdot)\partial_u$ is a global vector field on $\Pp^1$.  The first section is fixed at $z$, so
$
A_i(b,0)=0.
$
The linear term also vanishes.  To check this, choose a local frame of $L$ near $z$ and write
\[
N_i=\theta\bigl(de(\partial_{b_i})\bigr),
\qquad
\Delta=\theta\bigl(de(\partial_u)\bigr),
\qquad
A_i=\frac{N_i}{\Delta}.
\]
At $u=0$, the identity $e(b,0)=z$ gives
$
de(\partial_{b_i})|_{(b,0)}=0.
$
Thus $N_i(b,0)=0$.  Upon differentiation in the $u$-direction, the term involving $\mathrm d\theta$ vanishes because one of its arguments is $de(\partial_{b_i})|_{(b,0)}=0$.  The remaining term is the $b_i$-derivative of
$
\theta_z\bigl(de(\partial_u)\bigr)=\ell,
$
which is zero by our normalization.  Hence
$
\partial_u A_i(b,0)=0.
$
A global vector field on $\Pp^1$ has a coefficient of degree at most two in the affine coordinate $u$.  Thus
$
A_i(b,u)=u^2a_i(b)
$
for some holomorphic functions $a_i$ on $B$.  Put
$
\alpha=\sum_i a_i(b)\,db_i.
$
In these coordinates the contact distribution is
\[
D=\ker\lambda,\qquad \lambda=du+u^2\alpha,\qquad V=u^2\partial_u.
\]
To check that $V$ preserves this distribution, compute
\[
\mathrm d\lambda=2u\,\mathrm du\wedge\alpha+u^2\,\mathrm d\alpha,
\qquad
\lambda(V)=u^2,
\]
and hence
\[
\mathcal L_V\lambda
=\iota_V\mathrm d\lambda+\mathrm d(\lambda(V))
=2u\lambda.
\]
If $W$ is a vector field in $D$, then $\lambda(W)=0$, and the preceding identity gives
\[
\lambda([V,W])=-(\mathcal L_V\lambda)(W)=0.
\]
Thus $[V,W]$ is again in $D$, so $V$ preserves the contact distribution.

At $u=\infty$, use the second affine chart.  In the coordinate $v=-1/u$,
$
\lambda=v^{-2}(\mathrm dv+\alpha),$ $ V=\partial_v.$
Multiplying a contact form by a nonzero function does not change its kernel, so near $u=\infty$ we have $D=\ker(\mathrm dv+\alpha)$, which is preserved by this vector field.  Along the chosen curve, the contact Hamiltonian is $\lambda(V)=u^2$ in the $u$-chart and is $1$ in the normalized $v$-chart at infinity.  Hence it is nonzero at every point other than $z$.
\end{proof}

\section{First jets and minimal rational tangents}\label{sec:jets}

In this section, let $Y$ be a smooth connected projective variety. We write
$\operatorname{RatCurves}^n(Y)$ for the normalized space of rational curves
on $Y$; the superscript $n$ denotes normalization. An irreducible component
$K$ is called dominating if the union of its general curves is dense in $Y$.
A parametrized curve $\nu:\Pp^1\to Y$ is called \emph{free} when the
bundle $\nu^*T_Y$ is globally generated.  By the
Birkhoff--Grothendieck theorem, this is equivalent to the condition that
all summands in the splitting
\[
\nu^*T_Y\simeq\bigoplus_i\mathcal O_{\Pp^1}(a_i)
\]
have $a_i\geq0$.  In particular, a free curve satisfies
$H^1(\Pp^1,\nu^*T_Y)=0$.  The vanishing of this cohomology group is a
weaker condition, since $H^1(\Pp^1,\mathcal O(a))=0$ already for
$a\geq-1$.  Assume that the general member of $K$ is free, and let
$\mathcal H$ be the open locus of free birational parametrizations of
curves in $K$ inside the corresponding irreducible component of
$\operatorname{Hom}(\Pp^1,Y)$.

On the open set where $\nu$ is immersive, the tangent map is
\[
\tau:\mathcal H\times\Pp^1\dashrightarrow\Pp(T_Y),\qquad
(\nu,t)\longmapsto [\dd\nu_t(T_t\Pp^1)].
\]
Let $\mathcal C$ be the reduced closure of the image of this map, and write
\[
p:\mathcal C\longrightarrow Y
\]
for the projection induced by $\Pp(T_Y)\to Y$. The fibre $\mathcal C_y$
consists of the limiting tangent directions at $y$ of curves from $K$. A
local map preserves $\mathcal C$ if its projectivized differential preserves
the radical ideal sheaf $\mathcal I_{\mathcal C}^{\mathrm{red}}$.

\begin{lemma}\label{lem:jets}
Let $E$ be a globally generated vector bundle on $\Pp^1$.  Let $g:B\to\Pp^1$ be finite, with $B$ smooth and connected, and let $b\in B$ be unramified with $t=g(b)$.  Then every $s\in H^0(B,g^*E)$ has a section $s_0\in H^0(\Pp^1,E)$ whose first jet at $t$ equals the first jet of $s$ at $b$, transported by the local inverse of $g$.
\end{lemma}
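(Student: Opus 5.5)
The plan is to reduce to line bundles and treat the degree-zero summands separately, since only those fail to have all first jets realized by global sections.

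First, fix the shape of the data. By Birkhoff--Grothendieck, and because $E$ is globally generated,
\[
E\simeq\bigoplus_i\OO_{\Pp^1}(a_i),\qquad a_i\ge0.
\]
The statement is additive in $s$ and compatible with direct sums, so it suffices to treat one summand $E=\OO_{\Pp^1}(a)$ with $a\ge0$. Since $b$ is unramified over $t$, the map $g$ restricts to an isomorphism from a neighbourhood of $b$ onto a neighbourhood of $t$ (analytically, or on first-order neighbourhoods $\Spec\OO_{B,b}/\mathfrak m_b^2\simeq\Spec\OO_{\Pp^1,t}/\mathfrak m_t^2$). Hence the first jet $j^1_b(s)\in (g^*E)\otimes\OO_{B,b}/\mathfrak m_b^2$ transports to an element of $E\otimes\OO_{\Pp^1,t}/\mathfrak m_t^2$, a two-dimensional space for a line bundle.

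Case $a\ge1$. Here we do not need anything about $s$. The exact sequence
\[
0\to\OO(a)\otimes\mathfrak m_t^2\simeq\OO(a-2)\to\OO(a)\to\OO(a)\otimes\OO/\mathfrak m_t^2\to0,
\]
together with $H^1(\Pp^1,\OO(a-2))=0$ for $a\ge1$, shows that $H^0(\Pp^1,\OO(a))$ surjects onto the space of first jets at $t$. So any prescribed jet, in particular the transported jet of $s$, is realized by some $s_0$.

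Case $a=0$. This is the one point needing care, because $H^0(\OO)=\C$ cannot realize arbitrary first jets. I will use the global nature of $s$. Since $g$ is finite, it is proper, and since $\Pp^1$ is proper, $B$ is proper. Being smooth and connected, $B$ is a smooth projective curve, so $H^0(B,g^*\OO)=H^0(B,\OO_B)=\C$. Thus $s$ is a constant $c$, its first jet at $b$ is $(c,0)$, and the constant section $s_0=c$ on $\Pp^1$ has the same transported first jet.

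Summing the chosen $s_0$ over the summands gives the required section. The main obstacle is exactly the degree-zero summands. The argument there relies essentially on properness of $B$, which comes from the finiteness of $g$. For the summands of positive degree, the vanishing of $H^1(\OO(a-2))$ is routine.
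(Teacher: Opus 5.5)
Your proposal is correct and follows essentially the same route as the paper's proof. Both split $E$ by Birkhoff--Grothendieck into summands $\OO(a_i)$ with $a_i\ge0$, use $H^1(\Pp^1,\OO(a_i-2))=0$ to get surjectivity onto first jets at $t$ when $a_i\ge1$, and handle $a_i=0$ by noting that global sections of the trivial bundle on the connected proper curve $B$ are constants pulled back from $\Pp^1$.
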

\begin{proof}
By the Birkhoff--Grothendieck theorem,
$E=\bigoplus_i\OO(a_i)$ with $a_i\ge0$. We treat each summand separately.
If $a_i=0$, then $g^*\OO$ is trivial. Every global section is constant,
since $B$ is connected and projective, and is therefore the pullback of a
section on $\Pp^1$.

Suppose $a_i\ge1$.  The length-two subscheme $2t\subset\Pp^1$ records the
 value and the first derivative of a section at $t$.  The exact sequence
 \[
 0\longrightarrow\OO(a_i-2)\longrightarrow\OO(a_i)
 \longrightarrow\OO(a_i)|_{2t}\longrightarrow0
 \]
has $H^1(\Pp^1,\OO(a_i-2))=0$. Restriction to $2t$ is therefore surjective,
so a section of $\OO(a_i)$ can be chosen with any prescribed value and first
derivative at $t$. Because $g$ is unramified at $b$, it has a local inverse
there, so the first jet of $s$ at $b$ can be transported to the first jet at
$t$. Choosing a matching section in every summand gives the required $s_0$.
\end{proof}

For a local vector field $v$ on $Y$, write $\widehat v$ for its projectivized differential lift to $\Pp(T_Y)$.

\begin{proposition}\label{prop:firstjet}
Let $f:R\to Y$ be surjective and generically finite, with $R$ smooth and connected projective, and let $S\in H^0(R,f^*T_Y)$.  Suppose $f$ is biholomorphic from an analytic open set $U\subset R$ onto $V\subset Y$.  Transport $S$ to a vector field $v$ on $V$.  Then $\widehat v$ preserves the radical ideal sheaf of the reduced analytic space $\mathcal C|_V\subset\Pp(T_Y)|_V$.
\end{proposition}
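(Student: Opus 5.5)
The plan is an infinitesimal argument. I will show that $\widehat v$ is tangent to $\mathcal C$ at a dense set of smooth points of $\mathcal C|_V$, and then pass to the radical ideal. Everything rests on one observation. If $w=\dd\nu_t(\partial_t)\neq0$ is a tangent vector at $y=\nu(t)\in V$, then the value of $\widehat v$ at $[w]\in\Pp(T_Y)$ depends only on the first jet of $v$ along the direction $w$. In a local chart, it is the class of $\frac{\dd}{\dd t}\bigl(v(\nu(t))\bigr)$ modulo $w$, in the tangent space $T_{[w]}\Pp(T_yY)$ together with the horizontal component $v(y)$. So I only need a global deformation of $\nu$ whose variation field has the same first jet at $t$ as $v\circ\nu$.

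\textbf{Step 1 (choice of curve and base change).} Take $\nu\in\mathcal H$ general and $t\in\Pp^1$ with $\nu$ immersive at $t$ and $y=\nu(t)\in V$. The points $\tau(\nu,t)$ obtained this way are dense in $\mathcal C|_V$, and I may assume they are smooth points of $\mathcal C$.

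Since $f$ is generically finite, the locus $Z\subset Y$ over which $f$ has positive-dimensional fibres, or is not flat/finite, has codimension at least two. Free curves move in families that sweep out $Y$, so a general free $\nu$ meets $Z$ in no points. Hence the fibre product $\Pp^1\times_Y R$ is finite over $\Pp^1$.

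Let $B$ be the normalization of the irreducible component of this fibre product containing $(t,u)$, where $u=(f|_U)^{-1}(y)$. Then $g:B\to\Pp^1$ is finite, and $B$ is a smooth connected projective curve. Because $f$ is biholomorphic on $U$, the map $g$ is unramified at the point $b$ over $(t,u)$, and near $b$ its local inverse is $t'\mapsto(t',(f|_U)^{-1}\nu(t'))$.

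Pull $S$ back along $B\to R$. This gives a section $s\in H^0(B,g^*\nu^*T_Y)$, and near $b$ it is exactly $v\circ\nu$ transported by $g$.

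\textbf{Step 2 (jet interpolation and deformation).} Apply Lemma~\ref{lem:jets} with $E=\nu^*T_Y$, which is globally generated because $\nu$ is free. This produces $s_0\in H^0(\Pp^1,\nu^*T_Y)$ with the same first jet at $t$ as $v\circ\nu$.

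Since $H^1(\Pp^1,\nu^*T_Y)=0$, the space $\operatorname{Hom}(\Pp^1,Y)$ is smooth at $\nu$ with tangent space $H^0(\Pp^1,\nu^*T_Y)$. So there is a smooth arc $\nu_\varepsilon\subset\mathcal H$ with $\nu_0=\nu$ and $\partial_\varepsilon\nu_\varepsilon|_{0}=s_0$. The arc $\varepsilon\mapsto\tau(\nu_\varepsilon,t)$ lies in $\mathcal C$. By the observation above, its velocity at $\varepsilon=0$ is computed from the first jet of $s_0$ at $t$, which is the first jet of $v\circ\nu$. That velocity therefore equals $\widehat v(\tau(\nu,t))$. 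Hence $\widehat v$ is tangent to $\mathcal C$ at $\tau(\nu,t)$.

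\textbf{Step 3 (passing to the ideal).} Let $h$ be a local section of $\mathcal I^{\mathrm{red}}_{\mathcal C}$. The function $\widehat v(h)|_{\mathcal C}$ vanishes on a dense subset of the regular locus of each irreducible component of $\mathcal C|_V$. Since $\mathcal C$ is reduced, $\widehat v(h)$ vanishes on all of $\mathcal C|_V$, so $\widehat v(h)\in\mathcal I^{\mathrm{red}}_{\mathcal C}$.

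\textbf{Main obstacle.} I expect the hardest part to be Step 1: guaranteeing that $g$ is finite and unramified at the relevant point for a dense set of tangent directions, which needs $f$ to be finite over a general free curve. The first-jet identification in Step 2 also needs a careful chart check: the derivative of the tangent direction along the deformation must depend only on the $1$-jet of the variation field. I would verify this in coordinates on $\Pp(T_Y)$.
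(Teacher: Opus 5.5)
Your proposal is correct and follows essentially the same route as the paper. Both arguments take a general free curve through the chart, normalize the relevant component of $R\times_Y\Pp^1$ to get a finite $g:B\to\Pp^1$ unramified at the marked point, and interpolate the first jet with Lemma~\ref{lem:jets} using the global pulled-back section. Both then realize $s_0$ as a deformation in the smooth Hom-scheme, check that the resulting arc in $\mathcal C$ has velocity $\widehat v$, and conclude by vanishing on a dense set.

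The only real difference is how finiteness is obtained. You make the whole fibre product finite by saying the positive-dimensional-fibre locus of $f$ has codimension at least two, so a general free curve avoids it. That claim is true, by Stein factorization plus Zariski's main theorem on the smooth $Y$, but you should justify it. The paper avoids this step: it uses only the component through $(r,t_0)$ that dominates $\Pp^1$, and that component is automatically finite over $\Pp^1$.
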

\begin{proof}
Choose a general member $\nu:\Pp^1\to Y$ of the free component $\mathcal H$
and a general point $t_0$ at which $\nu$ is immersive, with
$y=\nu(t_0)\in V$.  We may also assume that the corresponding point of the
tangent-image closure is smooth.  Put $r=(f|_U)^{-1}(y)$.  Take the
irreducible component of $R\times_Y\Pp^1$ through $(r,t_0)$ which dominates
$\Pp^1$, and let $B$ be its normalization.  Since the non-finite locus of
$f$ is a proper closed subset and $K$ is dominating, a general member $\nu$
is not contained in that locus.  The chosen component is a proper curve and
its projection to $\Pp^1$ is therefore generically finite, because a general
fibre is contained in a finite fibre of $f$.  Hence
this projection is finite.  The other projection is finite because it is induced
by the base change of the finite map $\nu:\Pp^1\to Y$.  Thus the projections
give finite maps
\[
g:B\longrightarrow\Pp^1,\qquad j:B\longrightarrow R,
\qquad f\circ j=\nu\circ g.
\]
Because $f|_U$ is biholomorphic and $r\in U$, the map $g$ is unramified at
the point above $(r,t_0)$.  Set $E=\nu^*T_Y$.  Pulling back $S$ gives a
section $j^*S\in H^0(B,g^*E)$, and Lemma~\ref{lem:jets} supplies a section
$s_0\in H^0(\Pp^1,E)$ with the same value and first derivative at $t_0$.

Since $\nu$ is free, $H^1(\Pp^1,E)=0$.  Thus the Hom-scheme is smooth at
$\nu$, with tangent space $H^0(\Pp^1,E)$; because $\mathcal H$ is open in
the corresponding component, $s_0$ is tangent to a holomorphic one-parameter
family $\nu_\tau$ in $\mathcal H$ through $\nu$.  In local coordinates on
$Y$ and on $\Pp^1$, the value and first derivative of $s_0$ are exactly the
first-order variation of $\nu_\tau(t_0)$ and of
$\dd\nu_\tau|_{t_0}$.  The identity $f\circ j=\nu\circ g$, together with
the local inverse of $g$, identifies the first jet of $j^*S$ with the first
jet of the section $v\circ\nu$ at $t_0$; hence $s_0$ has this same first jet
and its value is $v(y)$.  Consequently, the tangent vector of the
projectivized differential of $v$ at
$[\dd\nu_{t_0}(T_{t_0}\Pp^1)]$ is tangent to the image of the tangent map.
Varying the general pair $(\nu,t_0)$ shows that $\widehat v$ is tangent to
$\mathcal C$ at a dense set of smooth points over $V$.

If $h$ is a local holomorphic function vanishing on the reduced space
$\mathcal C|_V$, then $\widehat v(h)$ vanishes on this dense smooth locus.
The identity principle therefore gives $\widehat v(h)|_{\mathcal C}=0$,
which is precisely
$\widehat v(\mathcal I_{\mathcal C}^{\mathrm{red}})\subset
\mathcal I_{\mathcal C}^{\mathrm{red}}$.  Hence the radical ideal is
preserved on all of $\mathcal C|_V$.
\end{proof}

\begin{corollary}\label{cor:family}
Let $F:R\times\Delta\to Y$ be holomorphic, with every $f_\tau=F|_{R\times\{\tau\}}$ surjective and generically finite.  Near a point where $f_0$ is \'etale, the local biholomorphisms $\varphi_\tau=f_\tau\circ f_0^{-1}$ preserve $\mathcal C$ under their projectivized differentials.
\end{corollary}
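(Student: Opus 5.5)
The plan is to reduce Corollary~\ref{cor:family} to Proposition~\ref{prop:firstjet}, applied at each fixed parameter value to the velocity of the family. Preserving $\mathcal C$ is an infinitesimal condition along a flow. So it suffices to show that the time-dependent vector field generating $\varphi_\tau$ has a projectivized lift tangent to $\mathcal C$ at every time. Integrating then gives that $\widehat{\varphi_\tau}$ maps $\mathcal C$ into $\mathcal C$ locally.

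\emph{Step 1: set up the local flow.} Let $r_0\in R$ be a point where $f_0$ is \'etale, and choose neighbourhoods $U\ni r_0$ and $V=f_0(U)$ with $f_0|_U$ biholomorphic. After shrinking $U$ and $\Delta$, each $f_\tau|_U$ is still biholomorphic onto its image $V_\tau$, because being a local biholomorphism is an open condition in $\tau$ on a relatively compact $U$. Put $\varphi_\tau=f_\tau\circ f_0^{-1}:V\to V_\tau$. The velocity is
\[
S_\tau=\partial_\tau F(\cdot,\tau)\in H^0(R,f_\tau^*T_Y).
\]
This is a global section, since $F$ is defined on all of $R\times\Delta$. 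Transport $S_\tau$ by $(f_\tau|_U)^{-1}$ to a vector field $v_\tau$ on $V_\tau$. Then
\[
\tfrac{d}{d\tau}\varphi_\tau=v_\tau\circ\varphi_\tau,
\]
so $\varphi_\tau$ is the flow of the time-dependent field $v_\tau$.

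\emph{Step 2: apply Proposition~\ref{prop:firstjet}.} For fixed $\tau$, the map $f_\tau$ is surjective and generically finite by hypothesis, and it is biholomorphic from $U$ onto $V_\tau$. Proposition~\ref{prop:firstjet} therefore shows that $\widehat{v_\tau}$ preserves the radical ideal of $\mathcal C|_{V_\tau}$.

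\emph{Step 3: integrate.} Since $\widehat{\varphi_\tau}$ is the flow of the time-dependent field $\widehat{v_\tau}$ on $\Pp(T_Y)$, for $h$ a local generator of the radical ideal we get
\[
\tfrac{d}{d\tau}\bigl(h\circ\widehat{\varphi_\tau}\bigr)=\bigl(\widehat{v_\tau}h\bigr)\circ\widehat{\varphi_\tau}.
\]
The function $\widehat{v_\tau}h$ lies in the ideal. This gives a linear ODE system for the pulled-back generators, with $h_i\circ\widehat{\varphi_0}=h_i$. It follows that $h\circ\widehat{\varphi_\tau}$ lies in the ideal for all small $\tau$. Equivalently, a point of $\mathcal C$ stays in $\mathcal C$ along the flow. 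The cleanest way to see this is the standard fact that a holomorphic vector field tangent to a reduced analytic subvariety, in the sense of preserving its ideal, has flow preserving that subvariety. Applying the same argument to the inverse flow gives equality of images.

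\emph{Main obstacle.} The hardest step is the uniformity in Step 1. One needs a single neighbourhood $U$ on which every $f_\tau$ with $\tau$ small is biholomorphic, so that the time-dependent field $v_\tau$ is defined on a common domain containing $\varphi_\tau(V')$ for a smaller $V'\subset V$. This follows from the inverse function theorem with parameters applied to $(r,\tau)\mapsto(F(r,\tau),\tau)$ at $(r_0,0)$. I would also check that the ideal-preservation argument of Step 3 is valid on a reduced, possibly singular analytic space. For that I would invoke the standard tangency-implies-invariance result for coherent ideals, in the form ``a vector field $w$ with $w(\mathcal I)\subset\mathcal I$ has flow preserving $\mathcal I$''.
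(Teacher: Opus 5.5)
Your proposal is correct and follows the paper's proof essentially step for step. You differentiate $F$ in $\tau$ to get $S_\tau\in H^0(R,f_\tau^*T_Y)$, apply Proposition~\ref{prop:firstjet} to each $f_\tau$ so that the projectivized lift of the time-dependent field preserves the radical ideal of $\mathcal C$, and then integrate through the linear ODE for the pulled-back generators, using the inverse flow for the reverse inclusion. Your use of the parametrized inverse function theorem, applied to $(r,\tau)\mapsto(F(r,\tau),\tau)$, to get a single $U$ on which every $f_\tau$ is biholomorphic is a detail that the paper leaves implicit.
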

\begin{proof}
Differentiating $F$ in the parameter $\tau$ gives, for each fixed $\tau$,
a section of $f_\tau^*T_Y$. Where $f_0$ is \'etale, the maps
$\varphi_\tau=f_\tau\circ f_0^{-1}$ are local biholomorphisms and form the
local flow of a time-dependent vector field. By Proposition~\ref{prop:firstjet},
its projectivized lift preserves the radical ideal of $\mathcal C$.

Put $\Psi_\tau=\Pp(\mathrm d\varphi_\tau)$ for the projectivized
differential.  Let $h_1,\ldots,h_m$ generate this ideal locally on
$\Pp(T_Y)$. Their pullbacks along $\Psi_\tau$ satisfy the homogeneous
linear system
\[
\frac{\dd}{\dd\tau}(\Psi_\tau^*h_j)
  =\sum_k \bigl(a_{jk}(\tau,\mathord\cdot)\circ\Psi_\tau\bigr)
    \,\Psi_\tau^*h_k.
\]
All $h_j$ vanish on $\mathcal C$ at the initial parameter, so uniqueness for
this system gives zero for all parameters. Thus
$\Psi_\tau(\mathcal C)\subset\mathcal C$; applying the same argument to
the inverse flow gives the reverse inclusion. The projectivized
differentials of $\varphi_\tau$ therefore preserve $\mathcal C$.
\end{proof}

\section{The family of minimal rational tangents}\label{sec:vmrt}

Assume $\Pic(Y)=\mathbb Z[L]$.  Choose an irreducible component $K$ of the
family of contact lines, namely rational curves $C$ with $\deg_L C=1$, whose
universal family dominates $Y$; such a component exists because contact lines
pass through every point by \cite[Theorem~3.1(i)]{Buczynski10}.  A general
contact line is free by \cite[Lemma~3.5]{Kebekus01}.  For $y\in Y$, the
\emph{variety of minimal rational tangents} (VMRT) is the closure in
$\Pp(T_yY)$ of the tangent lines at $y$ to curves from $K$ passing through
$y$.  We use the deformation theory of free rational curves from
\cite{Kebekus01} and the corrected contact-line theorem of
\cite[Theorem~3.1(iii) and Remark~3.2]{Buczynski10}.

\begin{theorem}[{\cite[Theorem~3.1(iii) and Remark~3.2]{Buczynski10}}]\label{thm:contact-lines}
For a general $y\in Y$, the reduced variety of minimal rational tangents associated with $K$ is smooth of dimension $n-1$ and Legendrian in $\Pp(D_y)$.  Every contact line from the chosen family through $y$ is smooth, and two distinct such lines through $y$ have distinct tangent directions at $y$.
\end{theorem}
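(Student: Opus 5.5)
This statement is quoted from \cite[Theorem~3.1(iii) and Remark~3.2]{Buczynski10}, so the plan is to reconstruct the standard argument rather than to build new machinery. I would run the usual deformation theory of minimal rational curves, specialised to contact lines, and then bring in the contact structure.

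First I would establish the splitting type. Take a general point $y$ and a general contact line $\nu:\Pp^1\to Y$ through $y$. By \cite[Lemma~3.5]{Kebekus01} such a line is free. The adjunction $K_Y^{-1}=L^{\otimes(n+1)}$ then gives
\[
\deg\nu^*T_Y=n+1.
\]
Since $T\Pp^1\simeq\OO(2)$ injects into $\nu^*T_Y$, I would compare with the contact sequence. Because $\theta$ kills the tangent direction of a contact line, we have $\nu^*D\supset T\Pp^1$. The twisted symplectic form $D\otimes D\to L$ identifies $\nu^*D\simeq\nu^*D^*\otimes\OO(1)$. Combining this self-duality with freeness forces
\[
\nu^*T_Y\simeq\OO(2)\oplus\OO(1)^{\oplus(n-1)}\oplus\OO^{\oplus(n+1)}.
\]
So the line is standard: it is minimal and unsplit, because $L$-degree one cannot degenerate further.

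Next I would obtain the dimension and smoothness of the VMRT. Kebekus's results on families of minimal rational curves through a general point give three facts:
\begin{itemize}
\item the subfamily $K_y$ of lines through $y$ is smooth of dimension $h^0(\nu^*T_Y(-1))-2=n-1$;
\item every member of $K_y$ is immersed at $y$;
\item the tangent map $\tau_y:K_y\to\Pp(T_yY)$ is a morphism that is finite and birational onto its image.
\end{itemize}
In the contact setting I would upgrade this to injectivity and immersivity of $\tau_y$. Two lines of $L$-degree one with the same tangent at $y$ would give a reducible degree-two cycle through a general point. Bend-and-break against the ampleness of $L$, together with the splitting type (the differential of $\tau_y$ is injective because $\OO(1)$-summands contribute no kernel), excludes this. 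The same argument also gives smoothness of each member, since a singular $L$-degree-one rational curve through a general point would contradict the unsplit standard splitting. The VMRT is then the smooth image $\tau_y(K_y)$, of dimension $n-1$, and it lies in $\Pp(D_y)$ because contact lines are tangent to $D$.

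Finally, the Legendrian property. An $(n-1)$-dimensional subvariety of $\Pp(D_y)\simeq\Pp^{2n-1}$ is Legendrian iff its affine cone is isotropic for the symplectic form $\mathrm d\theta_y$. The tangent space to the cone at $[\nu'(0)]$ is the positive part $H^0(\nu^*T_Y(-1))$ evaluated at $y$, that is, the span of $\OO(2)\oplus\OO(1)^{n-1}$ at $0$. The form $\mathrm d\theta$ restricted to these positive summands is a section of $\OO(a+b-1)$ in $L|_{\Pp^1}=\OO(1)$. I would show it vanishes by a degree count combined with the Lagrangian structure of the splitting, namely that the positive part is $\mathrm d\theta$-orthogonal to itself.

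I expect the main obstacle to be the injectivity and smoothness claims. This is exactly the point corrected in \cite[Remark~3.2]{Buczynski10}, and for it I would follow Buczyński's argument rather than the original one in Kebekus.
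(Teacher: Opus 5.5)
\textbf{There is a genuine gap at the injectivity and smoothness step, which you flag yourself.} The paper gives no proof of this statement; it imports it from Buczyński. So the question is whether your reconstruction stands on its own. It does not, because the argument you sketch for that step fails.

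\begin{itemize}
\item \emph{Bend-and-break does not apply.} Two distinct lines through $y$ with a common tangent direction form an isolated configuration, since the fibres of $\tau_y$ are finite. They do not form a positive-dimensional family through a fixed point with fixed tangent, so bend-and-break has nothing to act on.
\item \emph{A reducible degree-two cycle is no contradiction.} A reducible cycle of $L$-degree two through a general point exists trivially: the union of any two contact lines through $y$ is one (compare the reducible fibres in Lemma~\ref{lem:fibre}). Ampleness of $L$ only bounds the number of components.
\item \emph{Standard splitting does not exclude nodes.} It excludes only non-immersed points, because at a cusp the saturation of $T_{\Pp^1}$ in $\nu^*T_Y$ would have degree $\ge3$. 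But it is a condition on $\nu^*T_Y$ and cannot see two points of $\Pp^1$ with the same image. For example, on a del Pezzo surface $S$ of degree two, preimages of lines tangent to the branch quartic form a minimal dominating family of nodal rational curves with $\nu^*T_S\simeq\OO(2)\oplus\OO$.
\item \emph{Birationality is not enough for smoothness of the VMRT.} Birationality of $\tau_y$ (due to Hwang--Mok, not Kebekus) gives only generic injectivity. A finite unramified birational map from the smooth $K_y$ can have an image with several smooth branches through a point.
\end{itemize}

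The missing ingredient is a genuinely contact-geometric argument for general $y$. It must show that $\tau_y$ is injective on all of $K_y$, and that no line through $y$ has a non-injective point. That is exactly the content of the cited result. If you intend to cite Buczyński for it, as the paper does, drop the bend-and-break sketch rather than presenting it as a proof.

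\textbf{The remaining steps are sound in outline but need repair.}

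\begin{itemize}
\item \emph{Splitting type.} The forcing argument does work, but you should say why.
  \begin{itemize}
  \item The summands of $\nu^*D$ pair as $\OO(b)\oplus\OO(1-b)$.
  \item The $n$ summands with $b\ge1$ form a rank-$n$ subsheaf of $\nu^*T_Y$. Its degree is at most $n+1$, because $\nu^*T_Y$ has nonnegative summands of total degree $n+1$.
  \item The saturation of $T_{\Pp^1}$ in $\nu^*D$ forces some $b\ge2$.
  \end{itemize}
  Every line of $K$ through a general $y$ is free, so this applies to every member of $K_y$. It therefore gives immersivity of every line, and of $\tau_y$ on all of $K_y$. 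Make this point explicit.
\item \emph{Legendrian step.} $\mathrm d\theta$ on $\OO(a)\otimes\OO(b)$ with values in $\OO(1)$ is a section of $\OO(1-a-b)$, not $\OO(a+b-1)$. With the correct sign, the degree count alone gives isotropy for $a,b\ge1$. Your appeal to ``the Lagrangian structure of the splitting'' is circular.
\item \emph{Containment in $D$.} You also need the positive part of $\nu^*T_Y$ to lie in $\nu^*D$. This holds because it coincides with the positive part $\OO(2)\oplus\OO(1)^{n-1}$ of $\nu^*D$.
\end{itemize}
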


Here $D_y\subset T_yY$ is the contact hyperplane. A subvariety of
$\Pp(D_y)$ is Legendrian if it is tangent to the induced contact structure
and has the maximal dimension allowed by that condition.

Let $\mathcal C\subset\Pp(T_Y)$ be the integral reduced tangent-image closure for this fixed component.

\begin{lemma}\label{lem:generic-smooth}
The morphism $p:\mathcal C\to Y$ is smooth over a nonempty Zariski-open subset of $Y$.
\end{lemma}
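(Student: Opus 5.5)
The plan is to combine generic smoothness in characteristic zero with the pointwise description of the fibres supplied by Theorem~\ref{thm:contact-lines}. Since $\mathcal C$ is integral, reduced, and of finite type over $\C$, and $p:\mathcal C\to Y$ is dominant (indeed surjective, because $K$ is dominating and $\mathcal C$ is closed and projective over $Y$), generic smoothness applies. Concretely, the smooth locus $\mathcal C^{\mathrm{sm}}$ of $\mathcal C$ is a dense open subset, and the restriction $p|_{\mathcal C^{\mathrm{sm}}}$ is smooth over a dense open $Y_1\subset Y$. The obstacle is that the singular locus $\Sigma=\mathcal C\setminus\mathcal C^{\mathrm{sm}}$ may still meet fibres over $Y_1$. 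Smoothness of $p$ over an open set requires the whole preimage to lie in the smooth locus of $p$, not just a dense part of it.

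Step one is to remove $\Sigma$ using dimension counting together with properness. The map $p$ is proper, since $\mathcal C$ is closed in $\Pp(T_Y)$, which is proper over $Y$. Hence $p(\Sigma)$ is closed in $Y$. If $p(\Sigma)=Y$, then some irreducible component $\Sigma_0$ of $\Sigma$ dominates $Y$. Its general fibre would then be a subset of $\mathcal C_y$ of dimension $\dim\Sigma_0-\dim Y<\dim\mathcal C-\dim Y=n-1$. This alone is not a contradiction, so I would instead argue through the fibres. For general $y$, the scheme-theoretic fibre $\mathcal C_y$ must be compared with the reduced VMRT, which is smooth of dimension $n-1$ by Theorem~\ref{thm:contact-lines}.

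Step two, which I expect to be the main point, is to show that the scheme-theoretic fibre $p^{-1}(y)$ is reduced, and hence smooth, for general $y$. Generic reducedness of fibres holds in characteristic zero for a dominant morphism from an integral variety: over a dense open set the generic fibre is geometrically reduced, because the function field extension $\C(Y)\subset\C(\mathcal C)$ is separable. By constructibility, the fibres over a dense open $Y_2$ are reduced. Together with Theorem~\ref{thm:contact-lines}, for $y$ in a dense open $Y_3\subset Y_2$ the fibre $p^{-1}(y)$ is smooth of pure dimension $n-1$. This equals $\dim\mathcal C-\dim Y$, since $\dim\mathcal C=\dim Y+n-1$ is read off from the general fibre.

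Step three applies generic flatness to get a dense open $Y_4$ over which $p$ is flat. Over $Y_0=Y_3\cap Y_4$, the map $p$ is flat, of finite presentation, and has smooth fibres of the expected dimension, so $p$ is smooth over $Y_0$ by the fibrewise criterion for smoothness. This argument bypasses the analysis of $\Sigma$ in step one, because flatness plus smooth fibres forces $p^{-1}(Y_0)$ to be smooth. The only delicate input is that the reduced VMRT in Theorem~\ref{thm:contact-lines} is the set-theoretic fibre $\mathcal C_y$ for general $y$. This holds because, over the locus where general members of $K$ through $y$ are free and immersed, the closure of tangent directions at $y$ coincides with the fibre of the tangent-image closure.
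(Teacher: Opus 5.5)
Your proposal is correct and follows essentially the paper's route: characteristic-zero reducedness of the general fibre, identification of that fibre with the smooth VMRT of Theorem~\ref{thm:contact-lines}, then generic flatness and the fibrewise smoothness criterion (your abandoned step one is not needed). The one step you only assert, that $\mathcal C_y$ is set-theoretically the VMRT for general $y$, is handled in the paper by properness of the degree-one family (no splitting into two components, no contracted components), which makes the tangent map a morphism with closed image over the general locus; freeness and immersion are not what make closure commute with taking fibres, although a Chevalley plus fibre-dimension argument for general $y$ would also justify it.
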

\begin{proof}
A curve of $L$-degree one cannot degenerate into two nonconstant
components, since each would have positive integral $L$-degree.  The same stability argument used in
Lemma~\ref{lem:fibre} rules out contracted components.  Thus the compactified
family and its universal curve are proper.  Over the open set supplied by
Theorem~\ref{thm:contact-lines}, the curves are smooth and immersed at the
marked point, so the tangent map is a morphism and its image in each fibre
is closed.

Theorem~\ref{thm:contact-lines} says that, for general $y$, every contact
line through $y$ is smooth, the tangent directions are distinct, and the
reduced VMRT is smooth.  Over this open set the tangent map from the
normalised family is generically one-to-one and unramified at the general
point; in characteristic zero its image has the reduced generic fibre
described by the VMRT.  Thus the generic fibre of $p$ is geometrically
smooth.  Generic flatness gives a nonempty open subset over which $p$ is
flat, and smoothness of the fibres is open in a flat morphism of finite
presentation.  After shrinking this subset, all fibres are smooth there.
Hence $p$ is smooth over a nonempty Zariski-open subset of $Y$.
\end{proof}

\begin{lemma}\label{lem:transitivity}
Let $Y$ be smooth, connected and projective, and let $\mathcal C\subset\Pp(T_Y)$ be integral and dominating.  Suppose $p$ is smooth over a nonempty open set.  If at every $y\in Y$ there are local one-parameter families of biholomorphisms preserving $\mathcal C$ whose infinitesimal values span $T_yY$, then $p$ is smooth everywhere.  If $Y$ is simply connected, every fibre is connected and irreducible.
\end{lemma}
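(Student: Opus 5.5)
The plan is to use the local flows to move every point into the open set where $p$ is smooth, and then to transport that smoothness back. Let $Y^\circ\subset Y$ be the nonempty Zariski-open set over which $p$ is smooth. Fix an arbitrary $y\in Y$. By hypothesis there are local one-parameter families of biholomorphisms $\varphi^{(1)}_{\tau_1},\dots,\varphi^{(m)}_{\tau_m}$ defined near $y$ whose projectivized differentials preserve $\mathcal C$, and whose velocity vectors at $y$ span $T_yY$. Consider the composite
\[
\Phi(\tau_1,\dots,\tau_m)=\varphi^{(1)}_{\tau_1}\circ\cdots\circ\varphi^{(m)}_{\tau_m}(y).
\]
Its differential at the origin is the span of the infinitesimal values, so it is surjective onto $T_yY$. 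Hence $\Phi$ is a submersion near $0$, and its image contains an open neighbourhood of $y$. Because $Y^\circ$ is Zariski-open and dense, it meets this neighbourhood. So some composite $\psi=\varphi^{(1)}_{\tau_1}\circ\cdots\circ\varphi^{(m)}_{\tau_m}$, with small parameters, is a local biholomorphism from a neighbourhood of $y$ onto a neighbourhood of $\psi(y)\in Y^\circ$. Each factor preserves $\mathcal C$, so $\Pp(\dd\psi)$ maps $\mathcal C$ near $p^{-1}(y)$ isomorphically onto $\mathcal C$ near $p^{-1}(\psi(y))$, as reduced analytic germs.

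The key step is to check that this isomorphism commutes with projection to $Y$: $\Pp(\dd\psi)$ covers $\psi$. Therefore, over a neighbourhood of $y$, the map $p$ is analytically isomorphic to $p$ over a neighbourhood of $\psi(y)$, where it is smooth. Smoothness of a morphism is local in the analytic topology and is preserved by such compatible isomorphisms, so $p$ is smooth over $y$. I expect the main technical point to be making the preservation precise near the whole fibre, not just near a point of it. Each fibre $p^{-1}(y)$ is compact, being a closed subvariety of $\Pp(T_yY)$, and $\Pp(\dd\psi)$ is defined on all of $\Pp(T_Y)|_{V}$ for a neighbourhood $V$ of $y$. So one works with the full preimage over $V$. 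The preservation of the radical ideal, established as in Corollary~\ref{cor:family}, gives $\Pp(\dd\psi)(\mathcal C|_V)=\mathcal C|_{\psi(V)}$ as reduced spaces, provided each factor is applied on a common small neighbourhood. Since $y$ was arbitrary, $p$ is smooth everywhere.

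For the second assertion, $p:\mathcal C\to Y$ is proper, because $\mathcal C$ is closed in the projective bundle, and by the above it is smooth. Take its Stein factorization $\mathcal C\to Y'\to Y$. Since $p$ is smooth and proper, the map $Y'\to Y$ is finite étale. Here one uses that in characteristic zero the Stein factorization of a smooth proper morphism has étale finite part, counting connected components of fibres, which is locally constant by Ehresmann's theorem applied to the proper submersion. Since $\mathcal C$ is irreducible, $Y'$ is connected. Simple connectivity of $Y$ then forces $Y'\to Y$ to be an isomorphism, so every fibre of $p$ is connected. A fibre is a smooth connected variety, hence irreducible.
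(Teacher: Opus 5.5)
Your proposal is correct and takes essentially the paper's route. The paper argues by contradiction: the image $A=\overline{p(N)}$ of the nonsmooth locus is a proper closed subset, invariant under the $\mathcal C$-preserving local biholomorphisms, yet the orbit of any $y\in A$ contains a neighbourhood of $y$; you instead move $y$ directly into the smooth locus and transport smoothness back, which is the same idea stated positively, and your Stein factorization step for connected, irreducible fibres is identical to the paper's.
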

\begin{proof}
Let $N\subset\mathcal C$ be the nonsmooth locus of $p$, and let
$A=\overline{p(N)}$.  Since $p$ is smooth over a nonempty open subset,
$A$ is a proper closed subset of $Y$ (the map $p$ is proper because
$\mathcal C$ is closed in the projective bundle $\Pp(T_Y)$).  A local
biholomorphism preserving $\mathcal C$ also preserves the property of being a nonsmooth point of
$p$, and therefore preserves $A$.

Fix $y\in A$. Compose $\dim Y$ of the given local families whose
infinitesimal values form a basis at $y$. The resulting orbit map has
invertible differential at the origin. Its image therefore contains a
neighbourhood of $y$, which must lie in $A$. This contradicts the fact that
$A$ is a proper closed algebraic subset of the connected smooth variety
$Y$. Thus $A$ is empty, and $p$ is smooth everywhere.

The Stein factorization of $p$ is
\[
\mathcal C\longrightarrow Y'\longrightarrow Y,
\]
The second map is finite \'etale, since $p$ is proper and smooth.  The irreducibility of $\mathcal C$ makes $Y'$ connected.  If
$Y$ is simply connected, it has no nontrivial connected finite \'etale
cover, so $Y'=Y$.  Hence the fibres of $p$ are connected.  A smooth
connected fibre is irreducible, because its irreducible components are
open and disjoint.
\end{proof}

\begin{lemma}\label{lem:span}
For every $w\in Y$ there are local biholomorphism families preserving $\mathcal C$ whose infinitesimal values span $T_wY$.
\end{lemma}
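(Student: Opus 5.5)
Fix $w\in Y$. The plan is to produce the required one-parameter families from the additive actions of Proposition~\ref{prop:Ga}, transported to $Y$ via the evaluation maps $E_z$ of Lemma~\ref{lem:projective}. We vary both the fixed first point $z$ and the balanced transverse curve $C$ through $z$ and $w$, and then invoke Corollary~\ref{cor:family}.

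\textbf{Step 1: a local family for each admissible pair $(z,C)$.} First we pick a balanced transverse curve $C$ of class $\beta$ through $w$. Twistor fibres supply such a curve through every point. Any small deformation of $C_x$ is again balanced and transverse, since both conditions are open. For each $z\in C\setminus\{w\}$, the point $r=(C,z,w)\in W_z$ is a smooth point at which $E_z$ is \'etale, by Lemma~\ref{lem:etale}. The $\Ga$-action of Lemma~\ref{lem:action-normalization} gives a holomorphic map
\[
F:W_z\times\Delta\to Y,\qquad F(\xi,\tau)=E_z(\tau\cdot\xi).
\]
Each $f_\tau$ is $E_z$ composed with an automorphism of $W_z$, so it is surjective and generically finite. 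Corollary~\ref{cor:family} needs a smooth projective source. We will therefore replace $W_z$ by a $\Ga$-equivariant resolution of singularities, which exists in characteristic zero, and apply the corollary there near the lift of $r$. This yields local biholomorphisms $\varphi_\tau=f_\tau\circ f_0^{-1}$ near $w$ that preserve $\mathcal C$. Their infinitesimal value at $w$ is $(E_z)_*V(w)$.

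\textbf{Step 2: identify the infinitesimal values.} By Proposition~\ref{prop:Ga}, $V$ is tangent to $C$ and vanishes only at $z$, with a double zero there. Proposition~\ref{prop:contact} shows that $(E_z)_*V$ is a contact vector field whose Hamiltonian is nonzero at $w$. In particular the vector $(E_z)_*V(w)$ is a nonzero vector in $T_wC$, transverse to $D_w$. However, all such vectors lie in the line $T_wC$, so varying $z$ along a single curve gives only one direction.

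\textbf{Step 3: spanning.} Since $C$ is balanced, $H^0(N_{C/Y}(-w))\to (N_{C/Y})_{w'}$ is surjective for $w'\ne w$, as in the proof of Lemma~\ref{lem:etale}. Hence the tangent directions at $w$ of balanced transverse curves through $w$ sweep out an open subset of $\Pp(T_wY)$. The reason is that, with $w$ fixed, the deformation space $H^0(N_{C/Y}(-w))\simeq\C^{2n}$ moves the tangent line at $w$ through $H^0(N(-2w))=0$-free directions, so the map to $\Pp(T_wY)$ is a submersion. An open set of lines spans $T_wY$. So we choose $2n+1$ curves $C_i$ through $w$, each with a point $z_i\ne w$ on it, whose tangent lines at $w$ span $T_wY$. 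The corresponding families from Step 1 then have infinitesimal values spanning $T_wY$.

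\textbf{Main obstacle.} The delicate point is Step 1: applying Corollary~\ref{cor:family}, which requires the source $R$ to be smooth. This needs an equivariant resolution of the normal projective variety $W_z$, together with a check that the lifted action still makes every $f_\tau$ surjective and generically finite. Generic finiteness holds because $f_\tau$ differs from $f_0$ by an automorphism. A further concern is that $\mathcal C$ is defined over all of $Y$ while the corollary is local. Since $f_0$ is \'etale at the lift of $r$ and the statement asks only for local families at $w$, the local version should suffice.
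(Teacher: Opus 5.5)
Your proposal is correct and follows essentially the same route as the paper. You deform the twistor fibre through $w$ so that the tangent-direction map at $w$ is a local isomorphism, pick $2n+1$ curves $C_i$ with first endpoints $z_i\ne w$ whose tangent lines span $T_wY$, and apply Corollary~\ref{cor:family} to the transported additive actions. (In Step~3, the justification you need is the paper's vanishing of $H^0$ and $H^1$ of $N_{C/Y}(-2w)\simeq\OO(-1)^{\oplus 2n}$, not surjectivity of evaluation at other points $w'$.) Also, no $\Ga$-equivariant resolution is needed: the paper takes any projective resolution $r_i:R_i\to W_i$ that is an isomorphism near the \'etale point and uses $f_{i,\tau}=E_i\circ g_{i,\tau}\circ r_i$.
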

\begin{proof}
Let $C_0$ be the twistor fibre through $w$, regarded as a rational curve in
$Y$.  Its normal bundle is
\[
N_{C_0/Y}\simeq\OO(1)^{\oplus 2n}.
\]
Consider deformations of $C_0$ that continue to pass through $w$.  The
differential of the map sending such a deformation to its tangent direction
at $w$ is
\[
H^0(C_0,N_{C_0/Y}(-w))
 \longrightarrow (N_{C_0/Y})_w\otimes T_w^*C_0.
\]
The kernel and cokernel are respectively
$H^0(C_0,N_{C_0/Y}(-2w))$ and
$H^1(C_0,N_{C_0/Y}(-2w))$.  Since
$N_{C_0/Y}(-2w)=\OO(-1)^{\oplus2n}$, both groups vanish.  Thus the
tangent-direction map is locally biholomorphic at $C_0$.

Consequently, nearby curves through $w$ produce an analytic open subset of
$\Pp(T_wY)$.  We can choose $2n+1$ of their tangent lines whose
representatives span $T_wY$.  For each selected curve $C_i$, choose a point
$z_i\in C_i\setminus\{w\}$ as the fixed first endpoint, and use $w$ as the
second endpoint in the ordered incidence construction of
Sections~\ref{sec:action}--\ref{sec:contactfield}.  This gives a projective
source $W_i$, an evaluation map $E_i:W_i\to Y$, and an algebraic additive
action $g_{i,\tau}$ on $W_i$.  At the \'etale point $(C_i,z_i,w)$,
Proposition~\ref{prop:contact} gives a nonzero vector tangent to $C_i$ at $w$.
The local motions are thus constructed from deformations of twistor
fibres of degree two, not from the degree-one curves in $K$.

Take a projective resolution $r_i:R_i\to W_i$ that is an isomorphism near
the chosen \'etale point.  The maps
\[
f_{i,\tau}=E_i\circ g_{i,\tau}\circ r_i:R_i\longrightarrow Y
\]
are surjective and generically finite.  Corollary~\ref{cor:family} gives
local biholomorphisms preserving $\mathcal C$.  Their infinitesimal values
at $w$ are nonzero multiples of the selected tangent directions, and these
directions span $T_wY$.
\end{proof}

\begin{lemma}\label{lem:simply}
The twistor space $Y$ is simply connected.
\end{lemma}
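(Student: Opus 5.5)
We need to prove that $Y$ is simply connected. The simplest route goes through rational connectedness. By the Introduction, $Y$ is a smooth projective Fano manifold: $K_Y^{-1}\simeq L^{\otimes(n+1)}$ with $L$ ample. Fano manifolds are rationally connected (Campana; Koll\'ar--Miyaoka--Mori), and smooth projective rationally connected varieties are simply connected \cite[Chapter~IV]{Kollar96}. This already proves the lemma. To stay close to the paper's own methods, we also give a self-contained argument that avoids the general Fano theorem.

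\textbf{Step 1 (rational chain connectedness via twistor fibres).} Let $C\subset Y$ be a balanced transverse curve, for example a twistor fibre, so that $N_{C/Y}\simeq\OO(1)^{\oplus2n}$. Then $T_Y|_C\simeq\OO(2)\oplus\OO(1)^{\oplus2n}$ is ample, so $C$ is very free. A smooth projective variety containing a very free rational curve is rationally connected \cite[Chapter~IV, Theorem~3.7]{Kollar96}. Concretely, the computation in Lemma~\ref{lem:etale} shows this directly: the evaluation $E_z$ is surjective for each $z$, so any two points $z,w\in Y$ lie on a common image of a stable map of class $\beta$, that is, on a connected chain of at most two rational curves.

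\textbf{Step 2 (from rational connectedness to $\pi_1=1$).} We follow the standard argument. Let $\widetilde Y\to Y$ be a connected finite étale cover of degree $d$. Then $\widetilde Y$ is again smooth projective and rationally connected, since very free curves lift along étale maps and remain very free. It follows that $\chi(\OO_{\widetilde Y})=1$, because $H^i(\widetilde Y,\OO)=H^0(\widetilde Y,\Omega^i)^{\ast}$ by Hodge symmetry, and these groups vanish for $i>0$ on a rationally connected variety. (Pull a holomorphic form back to a very free curve: $\Omega^i|_C$ is a direct sum of negative line bundles, so the form vanishes along a dense family of curves.) Multiplicativity of $\chi(\OO)$ under étale covers then gives $1=d\cdot1$, so $d=1$. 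Hence $\pi_1(Y)$ has no nontrivial finite quotients.

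To rule out infinite fundamental group, we use Campana's and Koll\'ar's result that $\pi_1$ of a rationally connected manifold is finite. Alternatively, we use the chain connectedness from Step 1: every loop is homotopic to a loop in the image of a family of rational chains parametrized by a connected base, and rational curves are simply connected. This argument is \cite[Proposition~2.3 / Corollary~4.18]{Debarre} in Debarre's book.

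\textbf{Main obstacle.} The only nontrivial point is finiteness of $\pi_1$ in Step 2. We will cite the standard theorem (Campana, Koll\'ar--Miyaoka--Mori) rather than reprove it. A further alternative is available: the Kähler--Einstein metric on $Y$ has positive Ricci curvature, so Myers' theorem gives finite $\pi_1$ directly. Combined with $\chi(\OO_Y)=1$ via Kodaira vanishing for the Fano manifold $Y$, this yields simple connectivity with no appeal to rational connectedness. This last argument (Kobayashi's theorem) is the one we would write out, as it uses only data stated in the Introduction.
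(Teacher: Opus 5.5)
Your proposal is correct, and the argument you commit to writing out is exactly the paper's proof: Myers' theorem for the positive-Ricci K\"ahler--Einstein metric makes $\pi_1(Y)$ finite, and Kodaira vanishing on both $Y$ and its finite universal cover $\widetilde Y$ (where $K_{\widetilde Y}^{-1}$, the pullback of $K_Y^{-1}$, is still ample) then lets multiplicativity of $\chi(\OO)$ force $d=1$. Your rational-connectedness alternatives are also valid but rely on the heavier Campana/Koll\'ar--Miyaoka--Mori theorems, and your ``loops in a family of rational chains'' remark only loosely paraphrases the actual finite-index argument, so the metric route is the right one to write out.
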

\begin{proof}
The K\"ahler--Einstein metric on $Y$ has positive Ricci curvature
\cite[Theorem~2.1]{LeBrun95}.  Myers'
theorem \cite{Myers41} therefore shows that $\pi_1(Y)$ is finite.  Let
$q:\widetilde Y\to Y$ be the universal finite unramified cover, of degree
$d$.  The pullback metric still has positive Ricci curvature, and
$K_{\widetilde Y}^{-1}=q^*K_Y^{-1}$ is ample.  Kodaira vanishing
\cite{Kodaira53} gives
\[
H^j(Y,\OO_Y)=H^j(\widetilde Y,\OO_{\widetilde Y})=0
\qquad (j>0).
\]
Both holomorphic Euler characteristics are therefore one.
Hirzebruch--Riemann--Roch \cite{Hirzebruch54}, applied to the finite
unramified cover, gives
\[
1=\chi(\widetilde Y,\OO_{\widetilde Y})
 =d\,\chi(Y,\OO_Y)=d.
\]
Thus $d=1$, so the universal cover is trivial and $Y$ is simply connected.
\end{proof}

\begin{proposition}\label{prop:nonlinear}
The morphism $p:\mathcal C\to Y$ is smooth with connected irreducible fibres.  Its general fibre is nonlinear, in the sense that it is not a finite union of projective linear subspaces.
\end{proposition}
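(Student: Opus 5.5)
The smoothness and connectedness statements will follow by chaining the lemmas already proved; the nonlinearity statement needs a separate argument using the Legendrian property.

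\emph{Smoothness.} Lemma~\ref{lem:generic-smooth} shows that $p:\mathcal C\to Y$ is smooth over a nonempty Zariski-open subset. Lemma~\ref{lem:span} shows that at every $w\in Y$ there are local one-parameter families of biholomorphisms preserving $\mathcal C$ whose infinitesimal values span $T_wY$. The variety $\mathcal C$ is integral and dominating by construction, since it is the reduced closure of the image of an irreducible family whose curves cover $Y$. The first part of Lemma~\ref{lem:transitivity} then gives smoothness of $p$ everywhere.

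\emph{Connected, irreducible fibres.} Lemma~\ref{lem:simply} says $Y$ is simply connected. The second part of Lemma~\ref{lem:transitivity} then shows that every fibre of $p$ is connected and irreducible.

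\emph{Nonlinearity.} Let $y$ be general. By Theorem~\ref{thm:contact-lines}, $\mathcal C_y$ is a smooth Legendrian subvariety of $\Pp(D_y)\simeq\Pp^{2n-1}$ of dimension $n-1$. By the previous step it is irreducible. So if it were a finite union of linear subspaces, it would be a single linear $\Pp^{n-1}$, namely the projectivization of a Lagrangian subspace $\Lambda\subset D_y$ for the symplectic form $\mathrm d\theta|_{D_y}$. I would rule this out as follows.

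\begin{itemize}
\item[(i)] Since $p$ is smooth with irreducible fibres, linearity at a general point propagates. Linearity of fibres is a closed condition in a smooth family, so every fibre would be a Lagrangian $\Pp^{n-1}$, and $\mathcal C$ would be the projectivization of a rank-$n$ subbundle $\mathcal L\subset D$, Lagrangian in each fibre.
\item[(ii)] Contact lines tangent to $\mathcal L$ through every point would force $\mathcal L$ to be integrable. The standard argument for linear VMRTs, as in Hwang's work, gives that the bracket $[\mathcal L,\mathcal L]\subset\mathcal L$ when the VMRT is a linear space.
\item[(iii)] Integrability of $\mathcal L$ would then produce a foliation by $n$-dimensional Legendrian leaves covered by lines. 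Its leaves would be linear $\Pp^n$'s, giving a $\Pp^n$-fibration structure on $Y$.
\item[(iv)] With $\Pic(Y)=\mathbb Z[L]$ and $L\cdot(\text{line})=1$, such a fibration (or even a rationally connected foliation by lines) is impossible unless $Y$ has Picard number at least two. This contradicts the assumption of this section.
\end{itemize}

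A quicker alternative: Kebekus and Buczy\'nski show that a linear Legendrian VMRT occurs only for $Y=\Pp^{2n+1}$. In that case the VMRT is all of $\Pp(D_y)$, so the linear-Legendrian situation does not arise, and I would cite that result if it is available.

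\emph{Main obstacle.} The nonlinearity step is the hard part, specifically deducing integrability of the Lagrangian distribution $\mathcal L$ and contradicting Picard number one. The maximal nonintegrability of $D$ should help: the Legendrian condition makes $\mathrm d\theta|_{\mathcal L}=0$, so $[\mathcal L,\mathcal L]\subset D$ automatically. The remaining work is to show $[\mathcal L,\mathcal L]\subset\mathcal L$ using that the lines are tangent to $\mathcal L$.
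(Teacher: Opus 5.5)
Your proposal is correct and follows the paper's proof. Smoothness and connected irreducible fibres come from chaining Lemmas~\ref{lem:generic-smooth}, \ref{lem:span}, \ref{lem:simply} and~\ref{lem:transitivity}, and nonlinearity comes from reducing to a single linear $\Pp^{n-1}$ and invoking Hwang's integrability result for linear VMRTs. The only difference is the final contradiction. You argue directly from Picard number one: contact lines are trapped in leaves of an integrable rank-$n$ distribution, while $\Pic(Y)=\mathbb Z[L]$ forces $Y$ to be chain-connected by them. The paper instead cites the bracket-generating property of the VMRT distribution, which is proved by exactly that argument. Your step~(i) and the claim in~(iii) that the leaves are $\Pp^n$'s are not needed. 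The ``quicker alternative'' should be dropped: its attribution is garbled, and $\Pp^{2n+1}$ has no curves of $L$-degree one.
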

\begin{proof}
The first assertion follows from Lemmas~\ref{lem:generic-smooth},
\ref{lem:span}, \ref{lem:simply}, and \ref{lem:transitivity}.

Suppose, for contradiction, that a general fibre of $p$ is a finite union
of projective linear subspaces.  The fibre is irreducible, so it must be a
single linear space $\Pp^{n-1}$.  Because the VMRT lies in
$\Pp(D_y)$, its linear span defines, on a dense open subset of $Y$, a
rank-$n$ holomorphic distribution $\mathcal D\subset D\subset T_Y$.
The linear-VMRT integrability theorem
\cite[Proposition~7.4]{Hwang11} says that, after passing to the
\'etale locus of the relevant local finite cover, this distribution is
integrable; see also \cite[Theorem~3.1]{Araujo06} and
\cite[Proposition~1]{Hwang07}.  Integrability means that local sections of
$\mathcal D$ are closed under Lie brackets, so $\mathcal D$ is tangent to
local leaves of dimension $n$.

On the other hand, the distribution generated by the minimal rational
tangent directions is bracket-generating by
\cite[Proposition~6.8]{Hwang11}; see also \cite[Proposition~13]{HwangMok98}.
Thus local sections of $\mathcal D$ and their iterated Lie brackets
span the tangent space of dimension $2n+1$.
The bracket-generating property is unchanged after the same local pullback.
For an integrable rank-$n$ distribution, all these brackets remain
tangent to its $n$-dimensional leaves. This contradiction proves the
assertion.
\end{proof}

\section{Global contact fields and the classification}\label{sec:classification}

We apply the following theorem of Hwang and Mok to obtain global
vector fields on $Y$. A Fano manifold is a smooth projective variety with
ample anticanonical bundle; Picard number one means that its
N\'eron--Severi space has dimension one. A regular family over the disk
$\Delta$ is a smooth proper family. In the theorem below, its fibres
are assumed to be Fano. A map is generically finite if it is finite over
a dense open subset of its target.

\begin{theorem}[{\cite[Theorem~10, pp.~189--190]{Mok2008}}]\label{thm:hwangmok}
Let $\pi:\mathscr Y\to\Delta$ be a regular family of Fano manifolds of Picard number one.  Assume that $Y_0=\pi^{-1}(0)$ has a minimal rational component whose general variety of minimal rational tangents is nonlinear.  Let $R$ be a projective manifold and let $F:R\times\Delta\to\mathscr Y$ be a holomorphic map over $\Delta$ such that each fibre map $f_\tau$ is surjective and generically finite.  After shrinking the parameter disk, there is a holomorphic family of biholomorphisms $\Phi_\tau:Y_0\to Y_\tau$ with $\Phi_0=\mathrm{id}_{Y_0}$ and $f_\tau=\Phi_\tau\circ f_0$.
\end{theorem}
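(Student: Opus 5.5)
This is Hwang--Mok's theorem, quoted from \cite{Mok2008}. The plan is to sketch the mechanism behind it rather than reprove it in full, reusing the tools of Sections~\ref{sec:jets}--\ref{sec:vmrt}.

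The idea is to build the biholomorphisms $\Phi_\tau$ locally as $f_\tau\circ f_0^{-1}$ on the étale locus of $f_0$. The main work is to show that these local maps extend across the branch and indeterminacy loci to global biholomorphisms.

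\textbf{Step 1: local maps.} Fix a point $r\in R$ where $f_0$ is étale, and let $U\ni r$ be a neighbourhood on which $f_0$ is biholomorphic onto $V\subset Y_0$. Set
\[
\varphi_\tau=f_\tau\circ (f_0|_U)^{-1}:V\to Y_\tau .
\]
These are defined for small $\tau$ because étaleness is an open condition. Trivialize the family $\mathscr Y\to\Delta$ differentiably, as in Corollary~\ref{cor:family}, so that the $\tau$-derivative of $F$ becomes a section of $f_\tau^*T$. The first-jet interpolation of Proposition~\ref{prop:firstjet}, together with deformation invariance of minimal rational components in the family, should then show the following: the projectivized differential of $\varphi_\tau$ sends the VMRT structure $\mathcal C_0|_V$ to $\mathcal C_\tau$. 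So each $\varphi_\tau$ is a local VMRT-preserving biholomorphism.

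\textbf{Step 2: Cartan--Fubini extension.} Here the nonlinearity hypothesis enters. By the Cartan--Fubini extension theorem of Hwang--Mok, a VMRT-preserving biholomorphism between connected open subsets of Fano manifolds of Picard number one with nonlinear general VMRT extends uniquely to a global biholomorphism $\Phi_\tau:Y_0\to Y_\tau$. The mechanism is to propagate the map along chains of minimal rational curves: such a map sends germs of minimal rational curves to germs of minimal rational curves, and Picard number one makes the chains connect any two points. This step uses that a nonlinear VMRT has finite-dimensional, well-determined second fundamental form and projective geometry, which guarantees uniqueness of the continuation. I expect this extension to be the main obstacle, and I would cite Hwang--Mok rather than redo it.

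\textbf{Step 3: global identity and holomorphy.} The identity $f_\tau=\Phi_\tau\circ f_0$ holds on $U$ by construction. Both sides are holomorphic on the connected manifold $R$, so the identity theorem extends it to all of $R$. Changing the base point $r$ gives the same $\Phi_\tau$, by uniqueness of the extension and connectedness of the étale locus. This is why no monodromy ambiguity arises even though $f_0$ is only generically finite. Finally, holomorphic dependence on $\tau$ follows because $\Phi_\tau$ is determined by its germ on $V$, which depends holomorphically on $\tau$. We get $\Phi_0=\mathrm{id}$ since $\varphi_0=\mathrm{id}_V$. Shrinking $\Delta$ is needed only to keep $\varphi_\tau$ defined on a fixed $V$.
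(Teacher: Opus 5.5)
The paper does not prove this theorem; it quotes it from Hwang--Mok, so your sketch has to be measured against their argument. Your overall plan is the standard one: form the local maps $\varphi_\tau=f_\tau\circ f_0^{-1}$, show they preserve the VMRT, extend them by Cartan--Fubini, and conclude with the identity theorem. Step~3 is fine. Independence of the base point is automatic from $\Phi_\tau\circ f_0=f_\tau$ and the surjectivity of $f_0$. Holomorphy in $(y,\tau)$ follows from three facts: $\Phi\circ(f_0\times\mathrm{id})=F$; $\Phi$ is continuous because the proper surjection $f_0\times\mathrm{id}$ is a quotient map; and Riemann extension then applies.

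The genuine gap is Step~2. Hwang--Mok's Cartan--Fubini extension theorem does not assume merely that the general VMRT is nonlinear. It assumes the VMRT is positive-dimensional with \emph{finite Gauss map}. That hypothesis is what shows that a VMRT-preserving map sends germs of minimal rational curves to germs of minimal rational curves, and this is what drives continuation along chains. Nonlinearity is strictly weaker. A quadric cone, or any cone over a nonlinear variety, is not a union of linear spaces, yet its Gauss map has positive-dimensional fibres. Nonlinearity only says the second fundamental form is not identically zero; it does not say the form has no kernel. So for the statement as written, you are citing an extension theorem that is not available. Your remark about the second fundamental form is exactly where the argument fails. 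You must either strengthen the hypothesis or point out that in the paper's application the general VMRT is smooth, irreducible, of dimension $n-1\ge1$ and nonlinear (Theorem~\ref{thm:contact-lines}, Proposition~\ref{prop:nonlinear}). Zak's theorem then makes its Gauss map finite.

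Step~1 also does not work as described. Corollary~\ref{cor:family} treats a fixed target and integrates a \emph{holomorphic} time-dependent vector field. After a $C^\infty$ trivialization of $\mathscr Y\to\Delta$, the derivative $\partial_\tau F$ is no longer a holomorphic section, so Lemma~\ref{lem:jets} and Proposition~\ref{prop:firstjet} cannot be applied to it.

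Work holomorphically on the total space instead. The map $(y,\tau)\mapsto\varphi_\tau(y)$ is a local biholomorphism onto an open subset of $\mathscr Y$. It therefore pushes $\partial_\tau$ to a local holomorphic field $\xi$ with $\pi_*\xi=\partial_\tau$, and the flow of $\xi$ is $\varphi_\tau$. A minimal rational curve $\ell$ in a fibre is free in $\mathscr Y$, because $H^1(\ell,T_{Y_\tau}|_\ell)=0$ gives $T_{\mathscr Y}|_\ell\simeq T_{Y_\tau}|_\ell\oplus\OO$. First-jet interpolation along lifts of $\ell$ then shows that the projectivized lift of $\xi$ is tangent to the relative VMRT $\mathscr C\subset\Pp(T_{\mathscr Y/\Delta})$. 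For this you also need that the minimal rational component deforms to minimal rational components of nearby fibres, so that $\mathscr C$ restricts to $\mathcal C_\tau$. Integrating $\xi$, together with its inverse flow, gives $d\varphi_\tau(\mathcal C_0)=\mathcal C_\tau$. In the paper's only use of the theorem, $\mathscr Y=Y\times\Delta$, so Corollary~\ref{cor:family} applies verbatim there.
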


The theorem identifies the fibres by biholomorphisms under which the
given family of maps is constant.

A vector field is contact if its local flow preserves $D$. Its
contact Hamiltonian is the section $h=\theta(H)$ of $L$.

\begin{proposition}\label{prop:globalfield}
Fix $z\in Y$ and $\ell\in L_z\setminus\{0\}$.  Let $W_z$ be the
projective normalized incidence source constructed above, chosen to contain
the real twistor fibre through $z$.  Then there is a global holomorphic
contact vector field $H_z$ on $Y$ which agrees with $(E_z)_*V$ on every
\'etale chart coming from the balanced transverse incidence locus.  Its
Hamiltonian $h_z=\theta(H_z)\in H^0(Y,L)$ satisfies
\begin{equation}\label{eq:nonzero-hamiltonian}
h_z(\sigma z)\ne0.
\end{equation}
\end{proposition}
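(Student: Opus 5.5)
We apply Theorem~\ref{thm:hwangmok} to the trivial family $\mathscr Y=Y\times\Delta$. This is allowed because $\Pic(Y)=\mathbb Z[L]$ and, by Proposition~\ref{prop:nonlinear}, the contact-line component $K$ has nonlinear general VMRT. The maps come from the additive action. Take a projective resolution $r:R\to W_z$ that is an isomorphism near the balanced transverse incidence locus, and let $g_\tau$ be the algebraic $\Ga$-action of Lemma~\ref{lem:action-normalization}. Put
\[
f_\tau=E_z\circ g_\tau\circ r:R\longrightarrow Y .
\]
Each $g_\tau$ is an automorphism of $W_z$ and $E_z$ is surjective and generically finite by Lemma~\ref{lem:etale}, so every $f_\tau$ is surjective and generically finite. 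Theorem~\ref{thm:hwangmok} then gives, for small $\tau$, biholomorphisms $\Phi_\tau:Y\to Y$ with $\Phi_0=\mathrm{id}$ and $f_\tau=\Phi_\tau\circ f_0$. We define $H_z=\frac{\dd}{\dd\tau}\big|_{\tau=0}\Phi_\tau$, which is a global holomorphic vector field on $Y$.

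On an étale chart of $E_z$ in the balanced transverse locus, $\Phi_\tau=f_\tau\circ f_0^{-1}$ locally, so $H_z$ coincides there with $(E_z)_*V$. By Proposition~\ref{prop:contact}, $H_z$ preserves $D$ on such a chart. The charts together form a nonempty open set, hence a dense open set of the connected manifold $Y$. The condition that $H_z$ is contact is $\theta([H_z,W])=0$ for all local sections $W$ of $D$, which is a holomorphic identity. It holds on the open set, so by the identity principle it holds everywhere, and $H_z$ is a global contact vector field. Its Hamiltonian $h_z=\theta(H_z)$ is then a global section of $L$.

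The main step is the nonvanishing \eqref{eq:nonzero-hamiltonian}. The plan is to use the real twistor fibre $C_x=\pi^{-1}(\pi(z))$, which passes through $z$ and also through $\sigma z$, since $\sigma$ preserves fibres and acts antipodally on them. This fibre is balanced and transverse by \eqref{eq:intro-fibre}, and $W_z$ was chosen to contain it. Because $\sigma$ has no fixed points, $\sigma z\ne z$. Lemma~\ref{lem:etale} therefore makes $E_z$ étale at $(C_x,z,\sigma z)$, which gives an étale chart of the required type around $\sigma z$. Proposition~\ref{prop:contact} says the Hamiltonian of $(E_z)_*V$ is nonzero at every point of $C_x$ other than $z$, so $h_z(\sigma z)=\theta((E_z)_*V)(\sigma z)\ne0$.

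One point needs checking: $E_z$ may have several sheets over $\sigma z$, and the agreement statement must hold on each of them. This is automatic. On every étale chart the identity $H_z=(E_z)_*V$ follows from $f_\tau=\Phi_\tau\circ f_0$, and $\Phi_\tau$ is globally defined, so all sheets give the same field. A second check is that the shrinking of $\Delta$ in Theorem~\ref{thm:hwangmok} does not matter, since only the derivative at $\tau=0$ is used.
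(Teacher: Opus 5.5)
Your proposal is correct and follows the paper's proof essentially step for step. Both arguments apply Theorem~\ref{thm:hwangmok} to the constant family $Y\times\Delta$ with $f_\tau=E_z\circ g_{z,\tau}\circ r$, differentiate $f_\tau=\Phi_\tau\circ f_0$ at $\tau=0$ to obtain $H_z$, propagate the contact property from the \'etale charts of Proposition~\ref{prop:contact}, and read off $h_z(\sigma z)\neq0$ on the real twistor fibre using Lemma~\ref{lem:etale} and Proposition~\ref{prop:contact}. The only cosmetic difference is that the paper makes your ``holomorphic identity'' precise: $\xi\mapsto\theta([H_z,\xi])$ is $\OO_Y$-linear on $D$, so it is a global section of $\operatorname{Hom}(D,L)$ that vanishes on a nonempty open set and hence everywhere.
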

\begin{proof}
Choose a projective resolution $r:R\to W_z$ that is an isomorphism near
the smooth incidence locus.  Put $\mathscr Y=Y\times\Delta$ and define
\[
F(q,\tau)=(E_z(g_{z,\tau}(r(q))),\tau).
\]
Write $f_\tau(q)=E_z(g_{z,\tau}(r(q)))$.  Lemma~\ref{lem:etale}
shows that $f_0$ is generically finite and surjective.  Since each
$g_{z,\tau}$ is an automorphism, the same is true for every $f_\tau$.
Proposition~\ref{prop:nonlinear} verifies the VMRT hypothesis of
Theorem~\ref{thm:hwangmok}.  Applied to the constant family
$\mathscr Y=Y\times\Delta$, that theorem gives biholomorphisms
$\Phi_\tau:Y\to Y$ with
\[
f_\tau=\Phi_\tau\circ f_0,\qquad \Phi_0=\mathrm{id}_Y.
\]
Differentiation at $\tau=0$ gives a global vector field $H_z$,
which agrees with $(E_z)_*V$ on every \'etale incidence chart.

By Proposition~\ref{prop:contact}, $H_z$ preserves $D$ on a nonempty
open set.  For a local section $\xi$ of $D$, the quantity
$\theta([H_z,\xi])$ depends $\OO_Y$-linearly on $\xi$, and therefore defines
a section of $\operatorname{Hom}(D,L)$.  This section is zero on the open
set just considered, hence is zero everywhere.  Thus $H_z$ is a global
contact vector field and $h_z=\theta(H_z)$ is a global contact Hamiltonian.

The involution $\sigma$ preserves the real twistor fibre through
$z$, and $\sigma z\ne z$. At $\sigma z$, the field $(E_z)_*V$ is
nonzero and tangent to this transverse curve.
Lemma~\ref{lem:etale} and Proposition~\ref{prop:contact} therefore give
$h_z(\sigma z)\ne0$.
\end{proof}

\begin{corollary}\label{cor:globally-generated}
The contact line bundle $L$ is globally generated.
\end{corollary}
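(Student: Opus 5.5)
To show that $L$ is globally generated, we have to find, for each point $y\in Y$, a section of $L$ that does not vanish at $y$. Proposition~\ref{prop:globalfield} provides exactly such sections, so the plan is to apply it at a well-chosen point.

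Fix an arbitrary $y\in Y$. Since $\sigma$ is an involution, we may put $z=\sigma y$, so that $\sigma z=y$. Choose any $\ell\in L_z\setminus\{0\}$. We then need the projective normalized incidence source $W_z$, built as in Sections~\ref{sec:action}--\ref{sec:contactfield}, and we want it to contain the real twistor fibre through $z$. This fibre is $C=\pi^{-1}(\pi(z))$. It is balanced and transverse by \eqref{eq:intro-fibre}, and $\sigma$ preserves it, so $y=\sigma z\in C$ and $y\ne z$ because $\sigma$ has no fixed points. We therefore run the construction with $C_0=C$ and $z\in C_0$. Under this choice the point $(C,z,y)$ lies in the smooth balanced transverse incidence locus of $W_z$. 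By Lemma~\ref{lem:etale}, $E_z$ is étale at $(C,z,y)$.

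Proposition~\ref{prop:globalfield} now gives a global contact field $H_z$ with Hamiltonian $h_z\in H^0(Y,L)$ satisfying $h_z(\sigma z)=h_z(y)\ne0$. Hence the evaluation map $H^0(Y,L)\to L_y$ is surjective. Since $y$ was arbitrary, the morphism $H^0(Y,L)\otimes\OO_Y\to L$ is surjective, which is the definition of global generation.

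The main point to check is that Proposition~\ref{prop:globalfield} really applies at every $z$ and not only at a general one. Its proof uses Theorem~\ref{thm:hwangmok}, whose hypothesis, Proposition~\ref{prop:nonlinear}, is a statement about $Y$ and does not depend on $z$. It also uses the construction of $W_z$ and $E_z$, which only needs some balanced transverse curve through $z$; the twistor fibre through $z$ is such a curve for every $z\in Y$. Finally, the nonvanishing at $\sigma z$ comes from Proposition~\ref{prop:contact}, which says the Hamiltonian is nonzero at every point of the chosen curve other than $z$. With $W_z$ chosen to contain $C$, the argument therefore goes through for all $y$.
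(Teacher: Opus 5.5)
Your proposal is correct and is essentially the paper's proof: fix a point $w$, apply Proposition~\ref{prop:globalfield} with $z=\sigma w$, use $\sigma^2=\mathrm{id}$ to get $h_z(w)\ne0$, and conclude that $H^0(Y,L)\otimes\OO_Y\to L$ is surjective. Your extra checks spell out what the paper builds into the hypothesis ``chosen to contain the real twistor fibre through $z$'': the twistor fibre through $z$ is balanced, transverse and $\sigma$-stable, and it contains $\sigma z\ne z$, so Lemma~\ref{lem:etale} applies at $(C,z,\sigma z)$.
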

\begin{proof}
Fix $w\in Y$ and apply Proposition~\ref{prop:globalfield} with
$z=\sigma w$.  Since $\sigma^2=\mathrm{id}$, we have $\sigma z=\sigma^2w=w$.
Equation~\eqref{eq:nonzero-hamiltonian} gives a section $h_z$ of $L$
nonzero at $w$. As $w$ was arbitrary, the evaluation map
\[
H^0(Y,L)\otimes\OO_Y\longrightarrow L
\]
is surjective. Hence $L$ is globally generated.
\end{proof}

\begin{proof}[Proof of Theorem~\ref{thm:main}]
The first two cases in \eqref{eq:intro-reduction} are the standard twistor
spaces already identified by LeBrun.  Assume therefore that
$\Pic(Y)=\mathbb Z[L]$.  By Corollary~\ref{cor:globally-generated}, the
complete linear system of $L$ defines a morphism
\[
\varphi_L:Y\longrightarrow\Pp(H^0(Y,L)^*),\qquad L=\varphi_L^*\OO(1).
\]
This morphism is finite onto its image.  Indeed, a positive-dimensional fibre
would contain a curve $B$ on which $L$ has degree zero, contradicting the
ampleness of $L$.

The morphism $\varphi_L$ satisfies condition (H1) of Beauville's
recognition theorem. Condition (H2) requires the connected contact
automorphism group to be reductive. For positive quaternionic--K\"ahler
twistor spaces, this follows from LeBrun's result \cite{LeBrun95}, as
used in \cite[Theorem~0.1]{Beauville98}.

Beauville's theorem identifies $(Y,D)$ with the projectivized minimal
nonzero nilpotent orbit of a simple complex Lie algebra, which is the twistor
space of the corresponding Wolf space.  LeBrun's uniqueness theorem
\cite[Corollary~3.4]{LeBrun95} then identifies the positive
quaternionic--K\"ahler metric after scalar-curvature normalization.  Thus $g$
is the standard Wolf-space metric up to a positive constant factor.
\end{proof}

\noindent\textbf{Acknowledgments.}
The first named author is supported by Overseas High-Level Young Talent Recruitment Programs and is partially supported by the Chongqing Natural Science Foundation Innovation and Development Joint Fund CSTB2023NSCQ-LZX0031.
The second named author is partially supported by the National Key Research and Development Program of China (Grant No.~2024YFA1013201), the Natural Science Foundation of Chongqing (Grant No.~CSTB2024NSCQ-LZX0040), and the Special Project of Chongqing Municipal Science and Technology Bureau (Grant No.~2025CCZ015). The third named author is sponsored by the National Key R\&D Program of China (Grant No.~2024YFA1013200) and the National Natural Science Foundation of China (Grant No.~12671100).

\noindent\textbf{AI Declaration}: The authors used ChatGPT as an auxiliary tool in this work. The authors verified and completed all mathematical arguments and take full responsibility for the content of this paper.

\end{document}